\title{On subspaces whose weak$^*$ derived sets are proper and norm dense}
\author[Z. Silber]{Zdeněk Silber}
\email{zdesil@seznam.cz}
\keywords{}
\address{Department of Mathematical Analysis, Faculty of Mathematics and Physics, Charles University, Sokolovská 83, 186 75, Praha 8, Czech Republic}
\thanks{The research was supported by the Charles University, project GAUK no. 154121 and the grant SVV-2020-260583.}
\subjclass[2010]{46B10; 46B20}
\newtheorem{theorem}{Theorem}[section]
\newtheorem{lemma}[theorem]{Lemma}
\newtheorem{prop}[theorem]{Proposition}
\newtheorem{corollary}[theorem]{Corollary}
\newtheorem{question}{Question}
\theoremstyle{definition}
\newtheorem*{definition}{Definition}
\newtheorem*{remark}{Remark}
\newtheorem{notation}[theorem]{Notation}
\newtheorem{construction}[theorem]{Construction}
\newcommand{\norm}[1]{\left\lVert#1\right\rVert}
\begin{document}

\begin{abstract}
    We study long chains of iterated weak$^*$ derived sets, that is sets of all weak$^*$ limits of bounded nets, of subspaces with the additional property that the penultimate weak$^*$ derived set is a proper norm dense subspace of the dual. We extend the result of Ostrovskii and show, that in the dual of any non-quasi-reflexive Banach space containing an infinite-dimensional subspace with separable dual, we can find for any countable successor ordinal $\alpha$ a subspace, whose weak$^*$ derived set of order $\alpha$ is proper and norm dense.
\end{abstract}

\maketitle

\section{Introduction}

The \textit{weak$^*$ derived set} of a subset $A$ of a dual space $X^*$ is defined as the set of all weak$^*$ limits of bounded nets from $A$, i.e.
\begin{equation*}
    A^{(1)} = \bigcup_{n=1}^\infty \overline{A \cap n B_{X^*}}^{w^*}.
\end{equation*}
If $X$ is separable, bounded sets of $X^*$ are weak$^*$ metrizable, and thus the weak$^*$ derived set $A^{(1)}$ coincides with the \textit{weak$^*$ sequential closure} of $A$, that is the set of all weak$^*$ limits of sequences from $A$. The study of  weak$^*$ sequential closures of subspaces in duals of separable spaces was initiated by Banach \cite{Banach1934} and his school in 1930's. Later, weak$^*$ derived sets and weak$^*$ sequential closures found  significant applications. To name a few, they were applied by Piatetski-Shapiro \cite{piatetski1952} to characterization of sets of uniqueness in harmonic analysis, used by Saint-Raymond \cite{SaintRaymond1976} for Borel and Baire classification of inverses of continuous injective linear operators (see also \cite{Raja2004} for application for non-separable spaces), by Dierolf and Moscatelli \cite{DierolfMoscatelli1987} in the structure theory of Fréchet spaces, or by Plichko \cite{Plichko1986} to solve a problem on universal Markushevich bases posed by Kalton. For additional information and a historical account, see the survey on weak$^*$ sequential closures by Ostrovskii \cite{ostro2001} and the introduction to his new paper \cite{Ostro2022}.

The weak$^*$ derived set needs not to be closed under taking weak$^*$ limits of bounded nets, that is $A^{(1)}$ can be a proper subset of $\left( A^{(1)} \right)^{(1)}$, even if $A$ is a subspace. This inspires the definition of weak$^*$ derived sets of higher order. We use the convention that $A^{(0)} = A$. For a successor ordinal $\alpha$, the weak$^*$ derived set of $A$ of order $\alpha$ is $A^{(\alpha)} =\left(A^{(\alpha - 1)}\right)^{(1)}$. For a limit ordinal $\alpha$ we define $A^{(\alpha)} = \bigcup_{\beta < \alpha} A^{(\beta)}$. The order of $A$ is defined to be the least ordinal $\alpha$, such that $A^{(\alpha)} = A^{(\alpha + 1)}$. Note that it follows from the Krein-Šmulyan theorem that if $A$ is convex, then $A = A^{(1)}$ if and only if $A$ is weak$^*$ closed. Hence, if $A$ is convex, the order of $A$ is the least ordinal $\alpha$ such that $A^{(\alpha)} = \overline{A}^{w^*}$.

It is readily proved that a subspace $A$ of $X^*$ is norming, if and only if $A^{(1)} = X^*$. Recall that $A$ is said to be \textit{norming} if
\begin{align*}
    q_A(x) = \sup \{|f(x)|: \; f \in A \cap B_{X^*}\}
\end{align*}
defines an equivalent norm on $X$. Davis and Lindenstrauss \cite{DavisLinden} have shown that a Banach space is quasi-reflexive if and only if every subspace of its dual which separates points is also norming. Recall that a Banach space $X$ is \textit{quasi-reflexive}, if it is of finite codimension in its bidual. It thus follows by a quotient argument that a Banach space $X$ is quasi-reflexive if and only if $A^{(1)} = \overline{A}^{w^*}$ for every subspace $A$ of $X^*$, or in other words, if and only if the order of any subspace of $X^*$ is at most one. The study of possible orders of subspaces in duals of separable non-quasi-reflexive spaces was completed by the following theorem of Ostrovskii \cite{Ostrovskii1987}:

\begin{theorem} \label{Theorem:SepNonQR}
    Let $X$ be a separable non-quasi-reflexive Banach space. Then for every countable successor ordinal $\alpha$ there is a subspace $A$ of $X^*$ of order $\alpha$.
\end{theorem}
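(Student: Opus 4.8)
The plan is to build the required subspaces by transfinite induction on $\alpha$. The two work‑horses are an operation that raises the order of a subspace by one and an amalgamation that produces limit‑type orders; both rely on "raw material" extracted from the failure of quasi‑reflexivity. Two preliminary reductions. Since $X$ is separable, $(B_{X^*},w^*)$ is compact metrizable, so $A^{(1)}$ is simply the weak$^*$‑sequential closure of $A$ and every stage of the hierarchy can be analysed with bounded sequences instead of nets. For the raw material: by the Krein-Šmulyan theorem a linear functional on $X^*$ whose restriction to $B_{X^*}$ is weak$^*$‑continuous lies in $X$, so every $\Phi\in X^{**}\setminus X$ fails to be weak$^*$‑continuous — hence, by metrizability of $B_{X^*}$, weak$^*$‑sequentially continuous — on $B_{X^*}$; consequently $\Phi$ admits a semi‑normalized weak$^*$‑null sequence $(g_k)\subseteq X^*$ with $\langle\Phi,g_k\rangle=1$ for all $k$. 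Non‑quasi‑reflexivity — $X^{**}/X$ infinite‑dimensional, equivalently $X$ not quasi‑reflexive of any finite order — lets one do this countably many times, with $\Phi_1,\Phi_2,\dots\in X^{**}$ linearly independent modulo $X$ and, after passing to subsequences and a gliding‑hump biorthogonalisation, with the resulting weak$^*$‑null sequences supported on pairwise disjoint blocks of a single basic sequence in $X^*$ and "quantitatively independent". All subspaces below will be closed spans of linear combinations of these sequences, strictly between their closed linear span and $X^*$; working directly in $X^*$ this way avoids any reliance on $X$ possessing a convenient subspace, quotient or decomposition (for instance, $X$ might be hereditarily indecomposable).

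The finite levels follow by induction, the engine being a "raise the order by one" step. The case $\alpha=1$ uses only non‑reflexivity: a proper, weak$^*$‑dense, \emph{norming} subspace of $X^*$ has order exactly $1$ (prototype: $\ker\sigma\subseteq\ell_1=c_0^*$ for the non‑weak$^*$‑continuous functional $\sigma=(1,1,\dots)$). For the inductive step one takes a subspace $B$ realising order $\beta$, built from the blocks numbered $\ge 2$ so as to be "disjoint" from the first block and from $\Phi_1$, and modifies it by a one‑codimensional, non‑weak$^*$‑closed perturbation along the first block's weak$^*$‑null sequence — a "kernel (or graph) twist" modelled on the $\alpha=1$ case — obtaining $A$ for which one extra weak$^*$‑limit step is needed merely to recover a copy of the order‑$\beta$ configuration. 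Then $A^{(\beta)}$ is a proper but \emph{norming} subspace of $\overline{A}^{w^*}$, so $A^{(\beta+1)}=\overline{A}^{w^*}$, while $A^{(\gamma)}\subsetneq\overline{A}^{w^*}$ for $\gamma\le\beta$ by the lower bounds for $B$ together with the independence of the array; thus $A$ has order exactly $\beta+1$. Iterating yields every finite order, and, for any countable $\beta$, the order $\beta+1$ from the order $\beta$.

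For a general countable successor ordinal write $\alpha=\lambda+n$ with $\lambda$ zero or a limit and $n\ge 1$; if $\lambda=0$ this is a finite order, already handled. If $\lambda>0$, choose countable successor ordinals $\beta_k\nearrow\lambda$, realise (by strong induction) a subspace $A_k$ of order $\beta_k$ using only the $k$‑th of infinitely many infinite sub‑families of blocks, and set $A=\overline{\bigoplus_k A_k}$ inside the corresponding sum of block spans, arranged so that bounded weak$^*$‑limits are computed blockwise (keeping track of the uniform norm bounds). Then $A^{(\beta)}=\overline{\bigoplus_k A_k^{(\beta)}}$ for every $\beta<\lambda$, so $A^{(\beta)}\subsetneq\overline{A}^{w^*}$ for all $\beta<\lambda$ (for each such $\beta$ some block has not stabilised, since $\beta_k\to\lambda$), whereas $A^{(\lambda)}=\bigcup_{\beta<\lambda}A^{(\beta)}$ is a proper subspace of $\overline{A}^{w^*}$ that is nonetheless norming — an essentially finitely supported test vector involves only finitely many blocks, which have already reached their norming penultimate derived sets — so $A^{(\lambda+1)}=\overline{A}^{w^*}$ and $A$ has order $\lambda+1$. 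Applying the "raise by one" step $n-1$ further times gives a subspace of order $\lambda+n=\alpha$, completing the induction. The difficulty throughout is the \emph{lower} bounds, not the upper ones: membership in a derived set is witnessed by writing down an explicit bounded weak$^*$‑convergent sequence, but to see that the chain has not collapsed one must, at each level $\gamma<\alpha$, produce a concrete element of $X^{**}$ — assembled from the $\Phi_j$ and the coefficient data — that provably is \emph{not} a weak$^*$‑limit of bounded nets from $A^{(\gamma)}$, and one must make these witnesses uniform enough that the kernel twists and the amalgamation do not accidentally accelerate the chain. This quantitative non‑collapse estimate is the combinatorial heart of the theorem, and it is exactly where non‑quasi‑reflexivity \emph{of every finite order}, as opposed to mere non‑reflexivity, is indispensable.
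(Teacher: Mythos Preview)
The paper does not prove this theorem; it is quoted from \cite{Ostrovskii1987} as background. The paper's own contribution is the extension Theorem~\ref{Theorem:Main}, and the machinery built for that (Construction~\ref{Construction}, Proposition~\ref{Proposition:Rovnost}, Lemma~\ref{Lemma:Dosazeni}) is explicitly modelled on Ostrovskii's 1987 argument, so that is the natural comparison point. That argument is \emph{not} a transfinite induction on the order with a modular ``raise by one'' step plus an amalgamation at limits. Instead, for each $\alpha$ it writes down directly a countable set $\Omega(\alpha,A,z+af_k)\subseteq W^{**}$ by recursion on $\alpha$ (inside a subspace $W$ carrying the FDD of Lemma~\ref{Lemma:BiorthogonalSystem}), and the subspace sought is the preannihilator of $\Omega$. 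Both the upper bound (the derived chain reaches the whole dual at stage $\alpha+1$) and the lower bound (the chain has not yet closed at stage $\alpha$) are proved in one pass by induction on $\alpha$, simultaneously for all the recursive pieces; the summable coefficient sequences $(a_n)$ furnish the uniform tail estimates that make the lower bound (Proposition~\ref{Proposition:Rovnost}) go through.

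Your proposal is a strategy rather than a proof, and in the last paragraph you say so yourself: ``this quantitative non-collapse estimate is the combinatorial heart of the theorem'', and you do not supply it. Two load-bearing claims are only asserted. First, that the kernel twist raises the order by \emph{exactly} one: why does adjoining a fresh weak$^*$-null block not create short-cuts that collapse part of the chain of $B$? A proof must exhibit, for each $\gamma\le\beta$, a witness in $X^{**}$ separating $A^{(\gamma)}$ from $A^{(\gamma+1)}$, and naming the $\Phi_j$ as candidates is not the same as checking. Second, and more seriously, the amalgamation formula $A^{(\beta)}=\overline{\bigoplus_k A_k^{(\beta)}}$: this needs bounded weak$^*$-convergent sequences in $A$ to have uniformly bounded block components, i.e.\ the block decomposition must be an FDD with uniformly bounded projections in the ambient dual. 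Extracting such a decomposition from raw non-quasi-reflexivity is precisely the content of a structural lemma like Lemma~\ref{Lemma:BiorthogonalSystem}, and it is not free; the phrase ``gliding-hump biorthogonalisation'' does not deliver it. Until those two steps are actually carried out, what you have is a plausible outline, not a proof.
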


Further, the orders of subspaces in a dual to a separable Banach space must be countable and cannot be limit, see for example \cite{HumphreysSimpson}. Later, García, Kalenda and Maestre \cite{GarciaKalendaMaestre2010} asked the following questions in their paper about extension problems for holomorphic functions on dual Banach spaces.
\begin{itemize}
    \item Let $X$ be a quasi-reflexive Banach space. Is it true that $A^{(1)} = \overline{A}^{w^*}$ for every convex set $A$ in $X^*$?
    \item For which Banach space $X$ does there exist a subspace $A$ of $X^*$ such that $A^{(1)}$ is a proper norm dense subspace of $X^*$?
\end{itemize}
Both of these questions were solved by Ostrovskii in his paper \cite{Ostrovskii2011}. He showed that $A^{(1)} = \overline{A}^{w^*}$ for every convex subset $A$ of $X^*$, if and only if $X$ is reflexive. This result was later developed by the author \cite{Silber2021Conv} and Ostrovskii \cite{Ostro2022} in the spirit of Theorem \ref{Theorem:SepNonQR}. Let us note that it is still an open problem if the order of a convex set can be a countable limit ordinal. Regarding the second question, Ostrovskii proved the following theorem \cite[Theorem 1]{Ostrovskii2011}:

\begin{theorem} \label{Theorem:NonQRCont}
    The dual Banach space $X^*$ contains a linear subspace $A$ such that $A^{(1)}$ is a proper norm dense subset of $X^*$, if and only if $X$ is a non-quasi-reflexive Banach space containing an infinite-dimensional subspace with separable dual.
\end{theorem}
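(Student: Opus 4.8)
The plan is to prove the two implications separately. The forward implication (existence of such an $A$ forces the two structural properties of $X$) is comparatively soft; the reverse implication needs an explicit construction, which I expect to be the heart of the matter.

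\emph{Necessity.} Fix a subspace $A \subseteq X^*$ with $A^{(1)}$ proper and norm dense. First I would record that any norm-dense subspace $C$ of $X^*$ is norming — indeed $q_C = \norm{\cdot}$, since a norm-dense set contains functionals of norm arbitrarily close to $1$ near every $g \in B_{X^*}$ — hence $C^{(1)} = X^*$ by the criterion recalled in the introduction. Applying this to $C = A^{(1)}$ gives $A^{(2)} = X^*$, so $A$ has order exactly $2$, and norm-density of $A^{(1)}$ also forces $\overline{A}^{w^*} = X^*$, i.e. $A$ separates points. That $X$ is non-quasi-reflexive is now immediate: were $X$ quasi-reflexive, every subspace of $X^*$ would have order at most one, contradicting order $2$. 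The substantive point is to produce an infinite-dimensional subspace of $X$ with separable dual. Here I would pick $g \in X^* \setminus A^{(1)}$ and $(g_n) \subseteq A^{(1)}$ with $\norm{g_n - g} \to 0$, write $g_n \in \overline{A \cap k_n B_{X^*}}^{w^*}$, and note that $k_n \to \infty$: a bounded subsequence of $(k_n)$ would put $g$, being the norm- hence weak$^*$-limit of the corresponding $g_n$, into a single weak$^*$-closed ball-section of $A$, contradicting $g \notin A^{(1)}$. The remaining task is to convert this unbounded growth of norming constants along a norm-null ``defect'' sequence into an honest Banach-space obstruction inside $X$; I expect the key is to extract, from the $g_n$ and the bounded nets realising them, a normalised sequence in $X$ with no subsequence equivalent to the $\ell_1$-basis, i.e. (after passing to a subspace, via Rosenthal's $\ell_1$-theorem) an infinite-dimensional subspace of $X$ with separable dual. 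That $\ell_1$ must be excluded is exactly what makes $\ell_1(\Gamma) = c_0(\Gamma)^*$ a counterexample for uncountable $\Gamma$, and locating this sequence in $X$ is, I think, the delicate step of the whole direction.

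\emph{Sufficiency.} I would assemble $A$ from the two hypotheses. From the infinite-dimensional subspace $Y \subseteq X$ with separable dual, pass to a shrinking basic sequence $(e_n)$ spanning a subspace $Z$, with bounded biorthogonal functionals $(e_n^*) \subseteq X^*$; since $(e_n)$ is shrinking, the $(e_n^*)$ generate $Z^*$ and bounded subsets of $Z^* = X^*/Z^\perp$ are weak$^*$ metrizable, which supplies the sequential input the argument needs even though $X^*$ itself need not be weak$^*$ separable. From non-quasi-reflexivity of $X$, extract a James-type system witnessing that $X^{**}/X$ is infinite-dimensional — bounded blocks $(u_k) \subseteq X$ and functionals $(\phi_k) \subseteq X^*$ behaving like a summing basis, with the obstruction recorded by associated elements of $X^{**} \setminus X$. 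The subspace $A$ would then be a carefully tuned span interleaving $Z^\perp$, the differences $e_n^* - e_{n+1}^*$, and the $\phi_k$, arranged so that: telescoping of the $(e_n^*)$-differences drives a norm-dense set of functionals into $A^{(1)}$ (using shrinkingness and the metrizability above), while a distinguished element of $X^*$ manufactured from the James-type part is a weak$^*$ limit only of \emph{unbounded} combinations of elements of $A$, hence fails to lie in $A^{(1)}$ — giving properness, and exhausting the full strength of non-quasi-reflexivity (mere non-reflexivity does not suffice, as the James space shows).

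The main obstacle is the calibration in the last step: $A$ must be rich enough that bounded sequences drawn from it reconstruct a norm-dense subset of $X^*$, yet lean enough that the distinguished element stays outside $\bigcup_{n=1}^\infty \overline{A \cap n B_{X^*}}^{w^*}$ — a tension analogous to the one resolved in Ostrovskii's proof of Theorem~\ref{Theorem:SepNonQR} and its successors. A secondary difficulty is that the passage from the separable to the general case does not seem to reduce cleanly to a separable quotient (quotients need not respect weak$^*$ derived sets), so I expect the construction to be run directly inside $X^*$, with the separable-dual subspace $Y$ used only as the source of the metrizable/sequential data. I would finish by verifying the two defining properties of $A$ directly against the definition $A^{(1)} = \bigcup_{n=1}^\infty \overline{A \cap n B_{X^*}}^{w^*}$.
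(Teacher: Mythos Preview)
The paper does not give its own proof of this theorem: it is stated as Ostrovskii's result \cite[Theorem 1]{Ostrovskii2011}, and in the proof of Theorem~\ref{Theorem:Main} the equivalence $(1)\iff(2)$ is simply cited from that reference. So there is no in-paper argument to compare against for the necessity direction, and for sufficiency the only in-paper proxy is the $\alpha=1$ instance of the machinery built for Theorem~\ref{Theorem:MainW}.

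On sufficiency, your high-level plan (shrinking basic sequence from the separable-dual subspace, James-type data from non-quasi-reflexivity, an interleaved subspace of $X^*$) is in the same spirit as what the paper imports from \cite{Ostrovskii2011} via Lemma~\ref{Lemma:BiorthogonalSystem}. But the paper's construction is much more specific: the subspace is defined not by spans of ``differences $e_n^*-e_{n+1}^*$ and $\phi_k$'' but as an intersection of kernels of bidual elements $u_n+a_nf_{q(n)}$, with the $f_k$ weak$^*$ cluster points of uniformly bounded partial sums, all sitting inside a subspace $W$ with an explicit finite-dimensional decomposition. Your sketch stops exactly at the calibration you flag as the main obstacle, and gives no mechanism that would actually verify properness and norm-density simultaneously; what you describe is an intention, not yet a construction.

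On necessity, there is a genuine gap. The non-quasi-reflexivity part is fine. For the infinite-dimensional subspace with separable dual, however, you only propose to ``extract, from the $g_n$ and the bounded nets realising them, a normalised sequence in $X$ with no subsequence equivalent to the $\ell_1$-basis'' and then invoke Rosenthal. You do not explain how the data $g\notin A^{(1)}$, $g_n\to g$, $k_n\to\infty$ --- all of which live in $X^*$ --- produce any sequence in $X$ at all, let alone one with the required property; the step from ``unbounded norming constants along a norm-null defect sequence'' to ``sequence in $X$ avoiding $\ell_1$'' is asserted, not argued. Since the paper does not supply Ostrovskii's argument here either, I cannot tell you whether your route can be made to work, but as written it is not a proof.
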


The aim of this paper is to extend the result of Theorem \ref{Theorem:NonQRCont} for higher ordinals in the spirit of Theorem \ref{Theorem:SepNonQR}. The results will be valid for both real and complex scalars.

We will use the following notation. We write $\mathbb{F}$ for the underlying field $\mathbb{R}$ or $\mathbb{C}$. For a sequence $(x_n)_{n=1}^\infty$ in a Banach space $X$ we denote its closed linear span by $[x_n]_{n=1}^\infty$. Analogically, $[x_n]_{n=1}^N$ will denote the linear span of $(x_n)_{n=1}^N$. Recall that a couple $(\{x_i\}_{i \in I},\{x_i^*\}_{i \in I})$ is called a \textit{biorthoganal system} in $X$ if for all $i,j \in I$ we have $x_j^*(i) = \delta_{i,j}$, where $\delta_{i,j} = 1$ if $i = j$ and $\delta_{i,j} = 0$ otherwise. An indexed family $\{x_i\}_{i \in I}$ in $X$ is said to be \textit{minimal} if there is an indexed family $\{x^*_i\}_{i \in I}$ in $X^*$, such that $(\{x_i\}_{i \in I},\{x_i^*\}_{i \in I})$ forms a biorthoganal system. For a subset $A$ of a Banach space $X$ we denote by $A^\perp$ the \textit{annihilator} of $A$ in $X^*$, that is $A^\perp =  \{x^* \in X^*: \; x^*(a) = 0 \text{ for all } a \in A\} \subseteq X^*$. For a subser $B$ of $X^*$ we denote by $B_\perp$ the \textit{preannihilator} of $B$ in $X$, that is $B_\perp = \bigcap_{b \in B} \operatorname{Ker}b \subseteq X$.

\section{The results}

The statement of the main theorem is the following.

\begin{theorem} \label{Theorem:Main}
    Let $X$ be a Banach space. Then the following are equivalent:
    \begin{enumerate}
        \item $X$ is non-quasi-reflexive and contains an infinite-dimensional subspace with separable dual.
        \item There is a subspace $A$ in $X^*$, such that $A^{(1)}$ is a proper norm dense subspace of $X^*$.
        \item For each countable successor ordinal $\alpha$ there is a subspace $A$ in $X^*$, such that $A^{(\alpha)}$ is a proper norm dense subspace of $X^*$.
    \end{enumerate}
\end{theorem}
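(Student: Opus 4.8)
The implications $(3)\Rightarrow(2)$ (apply $(3)$ with $\alpha=1$) and $(1)\Leftrightarrow(2)$ (this is Theorem~\ref{Theorem:NonQRCont}) are immediate, so the entire content of the statement is $(1)\Rightarrow(3)$. I would first record the reformulation that makes $(3)$ tractable: if a subspace $A\subseteq X^*$ has $A^{(\alpha)}$ norm dense, then for every $x^*\in X^*$ one has $x^*\in\overline{A^{(\alpha)}\cap(\norm{x^*}+1)B_{X^*}}^{\,w^*}\subseteq A^{(\alpha+1)}$, whence $A^{(\alpha+1)}=X^*$. Thus asking that $A^{(\alpha)}$ be proper and norm dense is the same as asking that $A$ have order exactly $\alpha+1$ with $A^{(\alpha)}$ norm dense, and so, given $X$ as in $(1)$ and a countable successor ordinal $\alpha$, the target is a subspace of $X^*$ whose chain of derived sets increases strictly up to level $\alpha+1$ and whose $\alpha$-th derived set is proper and norm dense.

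I would construct such a subspace by transfinite recursion, the basic move being a \emph{one-step lifting}: from a norm-dense subspace $A\subseteq X^*$ that occurs as a first derived set, produce a subspace $B$ with $B^{(1)}=A$; then $B^{(1+\beta)}=A^{(\beta)}$ for every $\beta$, so $B$ realises at level $1+\beta$ what $A$ realised at level $\beta$. Starting from the base case $\alpha=1$ (Theorem~\ref{Theorem:NonQRCont}) and iterating the lifting disposes of all finite $\alpha$. Beyond $\omega$ the lifting alone is useless, since a prepended step is absorbed ($1+\beta=\beta$); so for a general countable successor ordinal, written $\alpha=\lambda+n$ with $\lambda$ a limit ordinal and $n\ge1$, I would instead perform a single transfinite construction in the spirit of the proof of Theorem~\ref{Theorem:SepNonQR}: build one subspace $B$ whose derived sets $(B^{(\beta)})_{\beta\le\lambda+n}$ strictly increase, with $B^{(\lambda)}=\bigcup_{\beta<\lambda}B^{(\beta)}$ a proper subspace behaving like the ``$\alpha=n$'' example (so that $B^{(\lambda+n)}=(B^{(\lambda)})^{(n)}$ comes out proper and norm dense), the limit stages requiring a tree/diagonal bookkeeping to keep the union $B^{(\lambda)}$ proper.

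The concrete realisation of each basic step uses both halves of $(1)$. Fix an infinite-dimensional subspace $Y\subseteq X$ with $Y^*$ separable; then $Y$ admits a shrinking Markushevich basis $(y_k,y_k^*)$, and Hahn--Banach extensions of the $y_k^*$ give functionals $x_k^*\in X^*$ whose restrictions to $Y$ span a dense subspace of $Y^*$, which is the ingredient that keeps the derived sets \emph{norm dense} (this is exactly the mechanism behind Theorem~\ref{Theorem:NonQRCont}). Non-quasi-reflexivity of $X$ provides, via Goldstine's theorem and a codimension count, sequences in $X$ whose weak$^*$ closures in $X^{**}$ strictly exceed their norm closures; these ``fresh'' weak$^*$ limit points are what force the derived sets to grow, as in Theorem~\ref{Theorem:SepNonQR}. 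A subspace $B$ lifting a given norm-dense $A$ would be the closed linear span of a family $\{a_j+\varepsilon_j u_j\}_j$, where $(a_j)$ is norm dense in $A\cap B_{X^*}$, the perturbations $u_j$ are assembled from the non-reflexive directions together with the $x_k^*$, and the scalars $\varepsilon_j$ decay rapidly; the coefficients and the distribution of the indices $j$ over a partition of $\mathbb{N}$ (a tree, in the transfinite case) are to be chosen so that the bounded balls of $B$ have weak$^*$ closures reproducing exactly $A$, and nothing more.

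The step I expect to be the main obstacle is keeping the two opposing demands in balance throughout: each basic step must add a genuine new derived set, yet must neither overshoot to all of $X^*$ nor destroy the norm density of the new top derived set; and at a limit ordinal $\lambda$ one must additionally certify that the increasing union $\bigcup_{\beta<\lambda}B^{(\beta)}$ of proper subspaces remains proper. This is the ``norm dense'' counterpart of the familiar difficulty of realising orders that are successors of limit ordinals, and is presumably why the statement is restricted to successor $\alpha$; I would handle it by fixing at the outset one functional in $X^{**}$ furnished by the non-quasi-reflexivity and designing the whole construction so that this functional annihilates every $B^{(\beta)}$ with $\beta<\lambda$, which witnesses properness of the union.
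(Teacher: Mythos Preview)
Your proposal is a strategy outline rather than a proof, and the central step---the ``one-step lifting''---is precisely the difficulty, not something to be assumed. You claim that given a norm-dense subspace $A$ one can produce $B$ with $B^{(1)}=A$ \emph{exactly}, but you never prove either containment: the phrase ``the coefficients \dots\ are to be chosen so that the bounded balls of $B$ have weak$^*$ closures reproducing exactly $A$, and nothing more'' is where the entire argument lives, and you give no mechanism for it. Moreover, even granting one lifting, your iteration breaks: to lift again you would need $B$ itself to satisfy the hypotheses of the lifting (in particular to be a first derived set, or at least to admit a pre-image), and nothing in your sketch guarantees this. For the transfinite case you correctly observe that prepending is absorbed past $\omega$ and that a single global construction is needed, but ``a tree/diagonal bookkeeping'' and ``fixing one functional in $X^{**}$ that annihilates every $B^{(\beta)}$'' are not arguments; they are restatements of the goal. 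The paper's content is exactly the execution of such a global construction, and it is not short.

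The paper proceeds quite differently from your lifting-and-iterating plan. It first passes to a concrete subspace $W\subseteq X$ spanned by a biorthogonal system $\{x_n\}\cup\{u_n\}$ with a finite-dimensional decomposition, a shrinking basic sequence $(u_n)$, and sequences with uniformly bounded partial sums (this is Lemma~\ref{Lemma:BiorthogonalSystem}). Inside $W^{**}$ it fixes weak$^*$ cluster points $f_k$ of these partial sums and builds, by transfinite recursion on $\alpha$, a tree of perturbed vectors $\Omega(\alpha,A,z+af_k)\subseteq W^{**}$; the desired subspace is the single pre-annihilator $K=\bigcap_n K(\alpha-1,A_n,u_n+a_nf_{q(n)})\subseteq W^*$. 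Properness of $K^{(\alpha)}$ is obtained by showing inductively that $K^{(\alpha-1)}\subseteq\bigcap_n\operatorname{Ker}(u_n+a_nf_{q(n)})$, which forces $K^{(\alpha-1)}$ to be non-norming (Proposition~\ref{Proposition:Rovnost} and Claim~1 of Theorem~\ref{Theorem:MainW}); norm density of $K^{(\alpha)}$ uses the shrinking basis $(u_n)$ together with a delicate finite-support approximation argument (Lemma~\ref{Lemma:Dosazeni} and Claims~2--4). Only at the very end is the result transported from $W^*$ to $X^*$ via the adjoint of the inclusion $W\hookrightarrow X$. There is no inductive ``lifting'' of subspaces of $X^*$ at all; the recursion happens in the index set of the defining functionals, not in the subspace itself.
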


The equivalence $\textit{(1)} \iff \textit{(2)}$ follows from Theorem \ref{Theorem:NonQRCont} and clearly $\textit{(3)} \implies \textit{(2)}$. The strategy to prove the implication $\textit{(1)} \implies \textit{(3)}$ is to combine the construction from \cite{Ostrovskii1987}, that is the proof of Theorem \ref{Theorem:SepNonQR}, and the construction from \cite{Ostrovskii2011}, which is the proof of Theorem \ref{Theorem:NonQRCont}. We will first find a subspace $W$ of $X$ spanned by a nice biorthogonal system and find a suitable subspace $K$ in $W^*$. In the following lemma we use the notation $n_k = \frac{k(k+1)}{2}$, $k \in \mathbb{N}$. Then the sequences $\{(n_m + 0)_{m=1}^\infty\} \cup \{(n_m + i - 1)_{m=i-1}^\infty: \; i \geq 2\}$ form a partition of $\mathbb{N}$, as illustrated in the following table.

\begin{align*}
    \begin{array}{cccccc}
        (n_m + 0)_{m=1}^\infty & (n_m + 1)_{m=1}^\infty & (n_m + 2)_{m=2}^\infty & (n_m + 3)_{m=3}^\infty & (n_m + 4)_{m=4}^\infty & \dots \\
        1&2&&&& \\
        3&4&5&&& \\
        6&7&8&9&& \\
        10&11&12&13&14& \\
        \vdots&\vdots&\vdots&\vdots&\vdots&\ddots
    \end{array}
\end{align*}

\begin{lemma} \cite[Lemma 2]{Ostrovskii2011} \label{Lemma:BiorthogonalSystem}
    Let $X$ be a non-quasi-reflexive Banach space containing an infinite-dimensional subspace with separable dual. Then there is a minimal system
    \begin{align*}
        \mathcal{W} = \{x_n\}_{n\in \mathbb{N}} \cup \{u_n\}_{n\in \mathbb{N}}
    \end{align*}
    satisfying:
    \begin{enumerate}[(i)]
        \item $\mathcal{W}$ and its biorthogonal functionals are uniformly bounded.
        \item The sequence $(u_n)_{n \in \mathbb{N}}$ is a shrinking basic sequence.
        \item The sequences $(x_{n_p})_{p=1}^\infty$ and $(x_{n_p+j-1})_{p = j-1}^\infty$, $j \geq 2$, have uniformly bounded partial sums.
        \item The sequence of subspaces
            \begin{align*}
                [x_1,x_2,u_1], [x_3,x_4,x_5,u_2], \dots, [x_{n_p},x_{n_p+1}\dots,x_{n_p+p},u_p],\dots
            \end{align*}
            forms a finite-dimensional decomposition of $W := \overline{\operatorname{span}} \mathcal{W}$.
    \end{enumerate}
\end{lemma}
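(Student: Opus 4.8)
The plan is to build $\mathcal W$ by splicing together two sequences, one produced by non-quasi-reflexivity and one by the presence of a subspace with separable dual, and then to organize the result into a finite-dimensional decomposition whose blocks are uniformly well conditioned. \emph{Input from non-quasi-reflexivity:} since $\dim X^{**}/X=\infty$, a Helly/local-reflexivity argument of the type underlying James' constructions yields, for every $k$, a finite sequence in the unit ball of $X$ whose partial sums stay uniformly bounded and whose relevant weak$^*$ cluster points in $X^{**}$ are linearly independent modulo $X$; laying these configurations out along the triangular array indexed by $n_p=\tfrac{p(p+1)}{2}$ produces a uniformly bounded family $(x_n)_{n\in\mathbb N}$ for which the ``column'' $(x_{n_p})_p$ and each ``diagonal'' $(x_{n_p+j-1})_{p\ge j-1}$, $j\ge 2$, has uniformly bounded partial sums, which is (iii). \emph{Input from the separable-dual subspace:} from the infinite-dimensional subspace $Y\subseteq X$ with $Y^*$ separable I would extract a normalized shrinking basic sequence $(u_n)\subseteq Y$, which is classical and gives (ii); basicness and shrinkingness survive passing to subsequences. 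It is important to read (iii) off \emph{before} any thinning, since a subsequence of a sequence with bounded partial sums need not have bounded partial sums.

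Next I would form the candidate blocks $B_p:=[x_{n_p},x_{n_p+1},\dots,x_{n_p+p}]+[u_p]$ (a direct sum after a suitable passage to a subsequence in $p$) and set $W:=\overline{\operatorname{span}}\,\mathcal W=\overline{\operatorname{span}}\bigcup_p B_p$, and then show that $(B_p)_p$ is an FDD of $W$, which is (iv). This is done by a gliding-hump / small-perturbation argument; the key observation is that all the thinning such an argument needs can be carried out \emph{in the block index $p$}, and in the index of the $u_n$'s, which leaves the column and diagonal partial sums of $(x_n)$ untouched, because those are preserved under deleting whole blocks.

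Once $(B_p)_p$ is (a small perturbation of) an FDD with some constant $C$, I would manufacture the biorthogonal functionals of $\mathcal W$ blockwise: for a vector of $B_p$ take its internal coordinate functionals on the finite-dimensional space $B_p$ and precompose with the FDD projection $P_p\colon W\to B_p$; since $\|P_p\|\le C$ for all $p$, property (i) reduces to the blocks $B_p$ being uniformly well conditioned, which one secures by choosing the James-type vectors inside each $B_p$ together with $(u_n)$ to be uniformly good from the outset rather than fixing them up afterwards. Finally the functionals so obtained on $W$ are extended to $X$ by Hahn--Banach, yielding minimality of $\mathcal W$ in $X$.

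The main obstacle is exactly this reconciliation: (iii) forbids the arbitrary thinning of the individual $x_n$ that one would ordinarily use to obtain (i) and (iv), so all the perturbation freedom must be pushed down to the level of the blocks $B_p$ and of the $u_n$, while the internal conditioning of $B_p$ --- whose dimension $p+2$ grows --- has to be controlled uniformly in $p$ straight from the construction. A secondary point requiring care is to verify that the weak$^*$ cluster points in $X^{**}$ witnessing non-quasi-reflexivity are not destroyed by the blocking and the perturbations, since it is their persistence that equips $W$ with the structure needed downstream.
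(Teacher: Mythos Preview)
The paper does not prove this lemma itself: it is quoted as \cite[Lemma~2]{Ostrovskii2011}, with the one-line remark that (i)--(iii) are contained in the statement there and (iv) is shown inside that proof. So there is no in-paper argument to compare against, and your sketch should be weighed against Ostrovskii's original construction.

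Your outline identifies the right two inputs and the right target structure, but there is a concrete gap in the reconciliation step. You correctly note that passing to a subsequence can destroy boundedness of partial sums, yet you then assert that thinning ``in the block index $p$'' is safe because the column and diagonal partial sums ``are preserved under deleting whole blocks.'' This is false: deleting the blocks with $p\notin S$ replaces the column $(x_{n_p})_p$ and each diagonal $(x_{n_p+j-1})_{p\ge j-1}$ by a subsequence, and your own caveat applies verbatim (e.g.\ $1,-1,1,-1,\dots$ has bounded partial sums while $1,1,1,\dots$ does not). Moreover, after such a deletion the surviving $k$-th block has dimension $p_k+2$ rather than $k+2$, so the triangular indexing demanded by the lemma is also destroyed. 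Hence the perturbation freedom you need for (iv) cannot be purchased by a posteriori block-thinning of an already-built $(x_n)$.

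In the actual construction one does not first manufacture all of $(x_n)$ and then perturb; the $x$'s and the $u$'s are chosen inductively, block by block, with each new block placed---via finite-codimensional separation arguments exploiting $\dim X^{**}/X=\infty$ together with the shrinking basic sequence---far enough from everything built so far that the FDD constant, the uniform conditioning of the (growing) block, and the bounded-partial-sum conditions along the prescribed column and diagonals are all secured at the moment of construction. Your ingredients are correct; the order of operations must be inverted so that no post-hoc thinning of the $x_n$ is ever required.
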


Note that assertions $(i)-(iii)$ follow from the statement of \cite[Lemma 2]{Ostrovskii2011}. Assertion $(iv)$ is shown within its proof. We will now prove that assertion \textit{(3)} of Theorem \ref{Theorem:Main} is satisfied for the space $W$. For information about shrinking bases see Section 3.2. of \cite{nigel2006topics} or section 1.b. of \cite{LindenstraussTzafririClassicalBSI}. For more information about finite-dimensional decompositions see Section 1.g. od \cite{LindenstraussTzafririClassicalBSI}.

\begin{notation} \label{Notation} We will further use the following notation.

    \begin{enumerate}[1.]
        \item For $p \in \mathbb{N}$ we set $x^1_p = x_{n_p}$. For $j \geq 2$ and $p \in \mathbb{N}$ we set $x^j_p = x_{n_{p+j-2} + j-1}$. In this notation, the sequences $(x^j_p)_{p=1}^\infty$, $j \in \mathbb{N}$, form a partition of the set $\{x_n\}_{n=1}^\infty$ and have uniformly bounded partial sums by Lemma \ref{Lemma:BiorthogonalSystem} \textit{(iii)}.
        \item For $w \in \mathcal{W}$ we shall denote by $\widetilde{w}$ the biorthogonal functional of $w$ (with respect to $\mathcal{W}$). We differ from the usual notation $w^*$ since we use upper indices for some elements of $\mathcal{W}$.
        \item For each $n \in \mathbb{N}$ we fix a weak$^*$ cluster point $f_n$ of the sequence $\left( \sum_{j=1}^k x_j^n \right)_{k=1}^\infty$ in $W^{**}$. Note that its existence is guaranteed by Lemma \ref{Lemma:BiorthogonalSystem} $(iii)$ and 1. and that those elements are also uniformly bounded.
        \item For $n \in \mathbb{N}$ we write $P_n$ for the canonical projection onto
            \begin{align*}
                [x_1,x_2,u_1,x_3,x_4,x_5,u_2,\dots,x_{n_n},x_{n_n+1}\dots,x_{n_n+n},u_n].
            \end{align*}
            The projections $P_n$, $n \in \mathbb{N}$, are uniformly bounded by Lemma \ref{Lemma:BiorthogonalSystem} $(iv)$ and the properties of finite-dimensional decompositions, see the discussion after \cite[Definition 1.g.1.]{LindenstraussTzafririClassicalBSI}. The adjoint $P_n^*$ is then a projection of $W^*$ onto
            \begin{align*}
                [\widetilde{x}_1,\widetilde{x}_2,\widetilde{u}_1,\widetilde{x}_3,\widetilde{x}_4,\widetilde{x}_5,\widetilde{u}_2,\dots,\widetilde{x}_{n_n},\widetilde{x}_{n_n+1}\dots,\widetilde{x}_{n_n+n},\widetilde{u}_n].
            \end{align*}
            These projections satisfy $P_n^*(x^*) \overset{w^*}{\longrightarrow} x^*$ for each $x^* \in W^*$ (again, see the discussion after \cite[Definition 1.g.1.]{LindenstraussTzafririClassicalBSI}).
        \item Let $i,k \in \mathbb{N}$. We say that $g \in W^{**}$ is of type $t(i,k)$, if either
            \begin{itemize}
                \item $i \neq k$ and $g = x^i_j + a f_k$ for some $a \in \mathbb{F}$ and $j \in \mathbb{N}$, or
                \item $g = u_i + a f_k$ for some $a \in \mathbb{F}$.
            \end{itemize}
        Let $A \subseteq{N}$. We say that a vector $g$ of type $t(i,k)$ is compatible with $A \subseteq \mathbb{N}$ if
        \begin{itemize}
            \item $g = x^i_j + af_k$ and $i,k \notin A$ or
            \item $g = u_i + af_k$ and $k \notin A$.
        \end{itemize}
        \item We denote the closed span of $(u_n)_{n \in \mathbb{N}}$ by $U$. Then by Lemma \ref{Lemma:BiorthogonalSystem} \textit{(ii)} the sequence $(\widetilde{u}_n \restriction U)_{n=1}^\infty$ is a basis of $U^*$.
        \item We will say that a vector $z \in W$ is finitely supported if $z \in \operatorname{span} \mathcal{W}$. Similarly, we will say that a vector $z^* \in W^*$ is finitely supported if $z^* \in \operatorname{span} \{\widetilde{w}: \; w \in \mathcal{W}\}$.
    \end{enumerate}
\end{notation}

\begin{construction} \label{Construction}
    For $\alpha < \omega_1$, $A \subseteq \mathbb{N}$ infinite with infinite complement and a vector $z + af_k$ of type $t(i,k)$ compatible with $A$, we define sets $\Omega(\alpha,A,z + a f_k)$ in the following recursive way:
    \begin{itemize}
        \item $\Omega(0,A,z + a f_k) = \{z + a f_k\}$;
        \item If $\alpha > 0$ is a successor ordinal, we split $A$ into infinitely many infinite subsets $(A_n)_{n=0}^\infty$ and take a summable sequence $(a_n)_{n=1}^\infty$ of positive numbers. Let $(p(n))_{n=1}^\infty$ be the increasing enumeration of $A_0$. We set
        \begin{align*}
            \Omega(\alpha,A,z + a f_k) = \{z + a f_k\} \cup \bigcup_{n=1}^\infty \Omega(\alpha - 1,A_n,x^k_n + a_n f_{p(n)}).
        \end{align*}
        \item If $\alpha > 0$ is a countable limit ordinal, we fix an increasing sequence $(\alpha_n)_{n=1}^\infty$ of ordinals such that $\alpha_n \rightarrow \alpha$ and again split $A$ into infinitely many infinite subsets $(A_n)_{n=0}^\infty$ and take a summable sequence $(a_n)_{n=1}^\infty$ of positive numbers. Let $(p(n))_{n=1}^\infty$ be the increasing enumeration of $A_0$. We set
        \begin{align*}
            \Omega(\alpha,A,z + a f_k) = \{z + a f_k\} \cup \bigcup_{n=1}^\infty \Omega(\alpha_n,A_n,x^k_n + a_n f_{p(n)}).
        \end{align*}
        \item We set $K(\alpha,A,z + a f_k) = \left(\Omega(\alpha,A,z + a f_k)\right)_\perp$.
    \end{itemize}
\end{construction}

\begin{remark}
    Note that if we set $\alpha_n = \alpha - 1$ for a successor ordinal $\alpha > 0$ and $n \in \mathbb{N}$, we can cover both cases of successor or limit $\alpha$ by the definition
        \begin{align} \tag{$*$} \label{*}
            \Omega(\alpha,A,z + a f_k) = \{z + a f_k\} \cup \bigcup_{n=1}^\infty \Omega(\alpha_n,A_n,x^k_n + a_n f_{p(n)}).
        \end{align}
    We will use this notation later, if the proofs do not depend on whether $\alpha$ is a successor or limit ordinal.
\end{remark}

\begin{lemma} \label{Lemma:Typy}
    Let $\alpha < \omega_1$, $A \subseteq \mathbb{N}$ infinite with infinite complement and a vector $z + a f_k$ of type $t(i,k)$ compatible with $A$. Then every element of $\Omega(\alpha,A,z + af_k)$ is either $z + af_k$ or an element of type $t(l,m)$ for $l \in A \cup \{k\}$ and $m \in A$.
\end{lemma}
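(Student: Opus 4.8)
The plan is to proceed by transfinite induction on $\alpha$. For $\alpha = 0$ we have $\Omega(0,A,z+af_k) = \{z+af_k\}$, so the conclusion holds trivially. For the inductive step I would invoke the common formula \eqref{*}, valid for both successor and limit $\alpha$, namely
\begin{equation*}
    \Omega(\alpha,A,z + a f_k) = \{z + a f_k\} \cup \bigcup_{n=1}^\infty \Omega(\alpha_n,A_n,x^k_n + a_n f_{p(n)}),
\end{equation*}
where $(A_n)_{n=0}^\infty$ is the chosen splitting of $A$ into infinite sets, $(a_n)_{n=1}^\infty$ the chosen summable sequence of positive reals, $(p(n))_{n=1}^\infty$ the increasing enumeration of $A_0$, and $\alpha_n < \alpha$ for every $n$.

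Before quoting the induction hypothesis I would check that each set on the right-hand side is legitimately defined, i.e.\ that $x^k_n + a_n f_{p(n)}$ is a vector of some type compatible with $A_n$. Since $z + af_k$ is compatible with $A$, the definition of compatibility forces $k \notin A$ in both of its cases. As $p(n) \in A_0 \subseteq A$, this yields $k \neq p(n)$, so $x^k_n + a_n f_{p(n)}$ is of type $t(k,p(n))$. It is compatible with $A_n$ because $A_n \subseteq A$ gives $k \notin A_n$, while $A_n$ is disjoint from $A_0 \ni p(n)$, so $p(n) \notin A_n$. Hence the recursion, and with it the induction hypothesis, applies to $\Omega(\alpha_n,A_n,x^k_n + a_n f_{p(n)})$ for every $n \geq 1$.

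Now take $g \in \Omega(\alpha,A,z+af_k)$. If $g = z+af_k$, we are done. Otherwise $g \in \Omega(\alpha_n,A_n,x^k_n + a_n f_{p(n)})$ for some $n \geq 1$, and the induction hypothesis leaves two possibilities. Either $g = x^k_n + a_n f_{p(n)}$, which is of type $t(k,p(n))$ with $k \in A \cup \{k\}$ and $p(n) \in A$; or $g$ is of type $t(l,m)$ with $l \in A_n \cup \{p(n)\}$ and $m \in A_n$, in which case $m \in A_n \subseteq A$ and $l \in A_n \cup \{p(n)\} \subseteq A \subseteq A \cup \{k\}$. In either case $g$ is $z+af_k$ or of type $t(l,m)$ with $l \in A \cup \{k\}$ and $m \in A$, which completes the induction.

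I do not expect a genuine difficulty here; the only point requiring attention is the bookkeeping just described---verifying that the vectors fed into the recursion are of an admissible type and compatible with the smaller index sets---since without it the sets $\Omega(\alpha_n,A_n,\cdot)$ would not be defined and the induction hypothesis could not be applied.
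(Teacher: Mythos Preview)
Your proof is correct and follows essentially the same transfinite induction as the paper's own argument. The only difference is that you explicitly verify the bookkeeping---that $x^k_n + a_n f_{p(n)}$ is of type $t(k,p(n))$ and compatible with $A_n$---whereas the paper leaves this implicit; this is a welcome addition of rigor rather than a different approach.
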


\begin{proof}
    We shall proceed by induction over $\alpha$. If $\alpha = 0$, the only element of $\Omega(0,A,z + af_k)$ is $z + af_k$ and the claim holds. Suppose the claim holds for all $\beta < \alpha$. By Construction \ref{Construction} and the remark following it
    \begin{align*}
        \Omega(\alpha,A,z + a f_k) = \{z + a f_k\} \cup \bigcup_{n=1}^\infty \Omega(\alpha_n,A_n,x^k_n + a_n f_{p(n)}).
    \end{align*}
    It follows that any element of $\Omega(\alpha,A,z + a f_k)$ is either $z + af_k$ or an element of $\Omega(\alpha_n,A_n,x^k_n + a_n f_{p(n)})$ for some $n \in \mathbb{N}$. By the induction hypothesis, for all $n \in \mathbb{N}$, all elements of $\Omega(\alpha_n,A_n,x^k_n + a_n f_{p(n)})$ are either $x^k_n + a_n f_{p(n)}$ or of type $t(l,m)$ where $l \in A_n \cup \{p(n)\} \subseteq A$ and $m \in A_n \subseteq A$. Hence, in either case, they are of type $t(l,m)$ where $l \in A \cup \{k\}$ and $m \in A$.
\end{proof}

\begin{prop} \label{Proposition:Rovnost}
    Let $\alpha < \omega_1$, $A \subseteq \mathbb{N}$ infinite with infinite complement and let $z + a f_k$ be of type $t(i,k)$ compatible with $A$. Then $\left(K(\alpha,A,z + a f_k) \right)^{(\alpha)} \subseteq \operatorname{Ker}(z + a f_k)$.
\end{prop}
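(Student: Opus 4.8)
The plan is a transfinite induction on $\alpha$. Write $K=K(\alpha,A,z+af_k)$ and, in the unified notation introduced after Construction~\ref{Construction}, $K_n=K(\alpha_n,A_n,x^k_n+a_nf_{p(n)})$; the induction hypothesis gives $(K_n)^{(\alpha_n)}\subseteq\operatorname{Ker}(x^k_n+a_nf_{p(n)})$ for every $n$ (valid since $\alpha_n<\alpha$). From the construction $K=\operatorname{Ker}(z+af_k)\cap\bigcap_n K_n$, so $K\subseteq K_n$ and hence $K^{(\beta)}\subseteq(K_n)^{(\beta)}$ for every $\beta$ by monotonicity of the derived-set operation. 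The Proposition is the case $\beta=\alpha$ of the sharper statement ``$K^{(\beta)}\subseteq\operatorname{Ker}(z+af_k)$ for all $\beta\le\alpha$'', which I would establish by a secondary induction on $\beta$; the base case $\beta=0$ holds because $z+af_k\in\Omega(\alpha,A,z+af_k)$, and the limit case is immediate from $K^{(\beta)}=\bigcup_{\gamma<\beta}K^{(\gamma)}$.

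The engine for the successor step is the observation that functionals in the relevant sets evaluate at $f_k$ through an absolutely convergent series. Fix $C\ge\sup_m\|f_m\|$ and let $\gamma<\alpha$. For all but finitely many $n$ --- for every $n$ if $\alpha$ is a successor (so $\alpha_n=\alpha-1\ge\gamma$), and for every $n$ with $\alpha_n\ge\gamma$ if $\alpha$ is a limit --- one has $K^{(\gamma)}\subseteq(K_n)^{(\gamma)}\subseteq(K_n)^{(\alpha_n)}\subseteq\operatorname{Ker}(x^k_n+a_nf_{p(n)})$. Hence, if $g\in K^{(\gamma)}$ with $\|g\|\le N$, then $|g(x^k_n)|=a_n|g(f_{p(n)})|\le a_nNC$ for all these $n$; the same bound passes to any weak$^*$ limit $h$ of a bounded net from $K^{(\gamma)}$, since $h(x^k_n)=\lim_\lambda h_\lambda(x^k_n)$ as $x^k_n\in W$. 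Because $f_k$ is a weak$^*$ cluster point of $\bigl(\sum_{j=1}^m x^k_j\bigr)_m$, the scalar $g(f_k)$ is a cluster value of the scalar sequence $\bigl(\sum_{j=1}^m g(x^k_j)\bigr)_m$, and the summability of $(a_n)$ makes this sequence Cauchy; therefore $g(f_k)=\sum_{j=1}^\infty g(x^k_j)$, and likewise $h(f_k)=\sum_{j=1}^\infty h(x^k_j)$.

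For the successor step $\beta=\gamma+1$, take $h\in K^{(\gamma+1)}$ and write $h=\lim_\lambda h_\lambda$ in the weak$^*$ topology with $(h_\lambda)$ a net in $K^{(\gamma)}$, $\|h_\lambda\|\le N$. By the engine, $h_\lambda(f_k)=\sum_j h_\lambda(x^k_j)$ and $h(f_k)=\sum_j h(x^k_j)$, while the secondary induction hypothesis $h_\lambda\in K^{(\gamma)}\subseteq\operatorname{Ker}(z+af_k)$ gives $h_\lambda(z)=-a\sum_j h_\lambda(x^k_j)$. Since $h_\lambda(x^k_j)\to h(x^k_j)$ for each $j$, these terms being dominated by a summable sequence off a finite set of indices, a dominated-convergence argument for nets yields $\sum_j h_\lambda(x^k_j)\to\sum_j h(x^k_j)$; combined with $h_\lambda(z)\to h(z)$ (as $z\in W$) this gives $h(z)=-a\sum_j h(x^k_j)=-ah(f_k)$, i.e.\ $h\in\operatorname{Ker}(z+af_k)$, completing the induction.

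The essential obstacle is that $\operatorname{Ker}(z+af_k)$ is in general not weak$^*$ closed ($f_k\notin W$), so membership in it cannot be transported through weak$^*$ limits directly. Everything in the construction --- the inclusions $K\subseteq K_n$, the induction hypothesis, and the choice of $f_k$ as a weak$^*$ cluster point of $\sum_j x^k_j$ --- is arranged exactly so that on bounded subsets of $K^{(\gamma)}$ the functional $g\mapsto g(f_k)$ admits the absolutely convergent representation $\sum_j g(x^k_j)$, which \emph{does} respect bounded weak$^*$ limits. The only real bookkeeping subtlety is the limit case, where the kernel inclusion for $x^k_n+a_nf_{p(n)}$ is available only once $\alpha_n\ge\gamma$; this is harmless because only a summable tail of $\sum_j g(x^k_j)$ needs to be controlled, the finitely many initial terms being absorbed by the $\varepsilon/3$ estimate.
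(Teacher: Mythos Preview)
Your proof is correct and follows essentially the same approach as the paper: a transfinite induction on $\alpha$ with a secondary induction on $\beta\le\alpha$, where the successor step is driven by the tail estimate $|g(x^k_j)|\le C a_j$ for $j$ beyond some $n_0$ (obtained from $K^{(\gamma)}\subseteq(K_n)^{(\alpha_n)}\subseteq\operatorname{Ker}(x^k_n+a_nf_{p(n)})$ via the outer induction hypothesis), which forces $f_k(g_\lambda)\to f_k(g)$ along bounded weak$^*$-convergent families. The only cosmetic differences are that the paper uses sequences (legitimate since $W$ is separable) and carries out an explicit $\varepsilon/4$ argument, whereas you phrase the same estimate as dominated convergence for nets and make the identity $f_k(g)=\sum_j g(x^k_j)$ explicit for both the net terms and the limit.
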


\begin{proof}
    We will proceed by induction over $\alpha$. It obviously holds for $\alpha = 0$ as $z + af_k \in \Omega(\alpha,A,z + af_k)$. Let us suppose that the claim holds for all ordinals smaller than $\alpha$. We will prove that $\left(K(\alpha,A,z + a f_k) \right)^{(\beta)} \subseteq \operatorname{Ker}(z + a f_k)$ for all $\beta \leq \alpha$ by induction over $\beta$. Again, the case $\beta = 0$ is clear. Now, suppose it holds for some $\beta < \alpha$. Take $y^* \in \left(K(\alpha,A,z + a f_k) \right)^{(\beta+1)}$. Then there is a sequence $(y_n^*)_{n=1}^\infty$ of elements in $\left(K(\alpha,A,z + a f_k) \right)^{(\beta)}$ which weak$^*$ converges to $y^*$. Recall that by Construction \ref{Construction} there is $n_0 \in \mathbb{N}$ such that $\beta \leq \alpha_n$ for all $n \geq n_0$. Hence,
    \begin{align*}
        \left(K(\alpha,A,z + a f_k) \right)^{(\beta)} &= \left(\operatorname{Ker}(z + a f_k) \cap \bigcap_{n=1}^\infty K(\alpha_n,A_n,x^k_n + a_n f_{p(n)}) \right)^{(\beta)} \\
        &\subseteq \bigcap_{n=n_0}^\infty \left( K(\alpha_n,A_n,x^k_n + a_n f_{p(n)}) \right)^{(\beta)} \\
        &\subseteq \bigcap_{n=n_0}^\infty \operatorname{Ker} (x^k_n + a_n f_{p(n)}),
    \end{align*}
    where the first equality follows from (\ref{*}) and the last inclusion follows from the induction hypothesis for $\beta \leq \alpha_n$ and for $n \geq n_0$. Hence, there is $C>0$ such that for every $n \in \mathbb{N}$ and $j \geq n_0$ we have
    \begin{align} \label{Eq1}
        |y_n^*(x_j^k)| = a_j |f_{p(j)}(y_n^*)| \leq C \: a_j.
    \end{align}
    Indeed, the sequence $(y_n^*)_{n=1}^\infty$ is weak$^*$ convergent and thus bounded, and $(f_{p(j)})_{j=1}^\infty$ is also bounded, see point 3. of Notation \ref{Notation}.
    
    We will show that $f_k(y_n^*) \overset{n}{\longrightarrow} f_k(y^*)$. Fix $\epsilon > 0$. By inequality (\ref{Eq1}) there is $m_0 \geq n_0$ such that
    \begin{align} \label{Eq3}
        \sum_{j=m_0}^\infty |y_n^*(x_j^k)| < \epsilon/8.
    \end{align}
    As $f_k$ is a weak$^*$ cluster point of $\left( \sum_{j=1}^m x^k_j \right)_{m=1}^\infty$, there exists $m_1 > m_0$ such that
    \begin{align} \label{Eq4}
        |f_k(y^*) - \sum_{j=1}^{m_1} y^*(x^k_j)| < \epsilon/4.
    \end{align}
    Let us now show that for any $n \in \mathbb{N}$ it holds that
    \begin{align} \label{Eq5}
        |f_k(y_n^*) - \sum_{j=1}^{m_1} y_n^*(x^k_j)| < \epsilon/4.
    \end{align}
    Indeed, for any $n \in \mathbb{N}$ there is $r_n > m_1$ such that $|f_k(y_n^*) - \sum_{j=1}^{r_n} y_n^*(x^k_j)| < \epsilon/8$. Since $m_1 > m_0$, it follows from (\ref{Eq3}) that
    \begin{align*}
        |f_k(y_n^*) - \sum_{j=1}^{m_1} y_n^*(x^k_j)| &\leq |f_k(y_n^*) - \sum_{j=1}^{r_n} y_n^*(x^k_j)| + |\sum_{j=m_1+1}^{r_n} y_n^*(x^k_j)| \\
        &\leq \epsilon/8 + \sum_{j=m_1 + 1}^\infty |y_n^*(x^k_j)| < \epsilon/8 + \epsilon/8 < \epsilon/4.
    \end{align*}
    It follows easily from the fact that $(y_n^*)_{n=1}^\infty$ is weak$^*$ convergent to $y^*$ that there is $n' \in \mathbb{N}$ such that for all $n > n'$ it holds that
    \begin{align} \label{Eq6}
        |\sum_{j=1}^{m_1} y_n^* (x_j^k) - \sum_{j=1}^{m_1} y^* (x_j^k)| < \epsilon/4.
    \end{align}
    By applying the triangle inequality and (\ref{Eq4}), (\ref{Eq5}) and (\ref{Eq6}), we finally get that for all $n > n'$
    \begin{align*}
        |f_k(y_n^*) - f_k(y^*)| \leq \epsilon/4 + \epsilon/4 + \epsilon/4 < \epsilon.
    \end{align*}
    But this precisely means that indeed $f_k(y_n^*) \overset{n}{\longrightarrow} f_k(y^*)$.
    
    Since $z \in W$ and thus $y_n^*(z) \rightarrow y^*(z)$, it follows that
    \begin{align*}
        (z + a f_k)(y^*) = \lim_n (z + a f_k)(y_n^*) = 0
    \end{align*}
    as $y_n^* \in \operatorname{Ker}(z + a f_k)$ by the induction hypothesis. Hence, $y^* \in \operatorname{Ker}(z + a f_k)$. What is left is the induction step for a limit ordinal $\beta$ which follows easily from the definition of weak$^*$ derived sets for limit ordinals.
\end{proof}

\begin{definition}
    Let $\alpha < \omega_1$, $A \subseteq \mathbb{N}$ infinite with infinite complement, $z + a f_k$ a vector of type $t(i,k)$ compatible with $A$. For $d \in \mathbb{F}$ define
    \begin{align*}
        Q_d(\alpha,A, z + a f_k) = K(\alpha,A,z + a f_k) \cap \left( d \widetilde{z} + \operatorname{span} \{\widetilde{x}^t_s: \; s \in \mathbb{N}, \: t \in A \cup \{k\}\} \right).
    \end{align*}
    That is, $Q_d(\alpha,A, z + a f_k)$ are those elements from $K(\alpha,A,z + a f_k)$ which have finite support in the relevant set and have $d$ as the coefficient at $\widetilde{z}$.
\end{definition}

\begin{lemma} \label{Lemma:RozsBiort}
    Let $k,i,j \in \mathbb{N}$. Then $f_k(\widetilde{x}^i_j) = \delta_{k,i}$ and $f_k(\widetilde{u}_i) = 0$.
\end{lemma}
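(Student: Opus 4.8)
The plan is to unwind the definition of $f_k$ and invoke biorthogonality. Recall from point 3 of Notation \ref{Notation} that $f_k$ is, by construction, a weak$^*$ cluster point in $W^{**}$ of the sequence of partial sums $\bigl(\sum_{m=1}^{l} x^k_m\bigr)_{l=1}^{\infty}$, each partial sum being viewed in $W^{**}$ via the canonical embedding. The only general fact we need is the following: if $g\in W^{**}$ is a weak$^*$ cluster point of a sequence $(w_l)_{l=1}^{\infty}\subseteq W$, then for every fixed $\xi\in W^*$ the scalar $g(\xi)$ is a cluster point of the numerical sequence $(\xi(w_l))_{l=1}^{\infty}$; in particular, if that numerical sequence is eventually constant, then $g(\xi)$ equals its eventual value (and so does not depend on which weak$^*$ cluster point was chosen).

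I would then apply this with $\xi=\widetilde{x}^i_j$ and with $\xi=\widetilde{u}_i$. By point 1 of Notation \ref{Notation} the doubly-indexed family $(x^i_j)_{i,j\in\mathbb N}$ is a bijective relabeling of $\{x_n\}_{n\in\mathbb N}$, so biorthogonality of $\mathcal W$ gives $\widetilde{x}^i_j(x^k_m)=1$ when $(i,j)=(k,m)$ and $0$ otherwise, $\widetilde{x}^i_j(u_m)=0$, $\widetilde{u}_i(x^k_m)=0$, and $\widetilde{u}_i(u_m)=\delta_{i,m}$. Consequently
\begin{align*}
    \Bigl(\sum_{m=1}^{l} x^k_m\Bigr)(\widetilde{x}^i_j)=\sum_{m=1}^{l}\widetilde{x}^i_j(x^k_m),
\end{align*}
which is identically $0$ if $k\neq i$, and equals $0$ for $l<j$ and $1$ for $l\geq j$ if $k=i$; in either case the sequence is eventually constant with value $\delta_{k,i}$. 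Likewise $\bigl(\sum_{m=1}^{l}x^k_m\bigr)(\widetilde{u}_i)=\sum_{m=1}^{l}\widetilde{u}_i(x^k_m)=0$ for every $l$.

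Combining these evaluations with the general fact above yields $f_k(\widetilde{x}^i_j)=\delta_{k,i}$ and $f_k(\widetilde{u}_i)=0$, which is the assertion. I expect no real obstacle here; the only points needing (minor) care are that the $x^i_j$ genuinely partition $\{x_n\}_{n\in\mathbb N}$, so that "$x^i_j=x^k_m$" is the same as "$(i,j)=(k,m)$", and the triviality that an eventually constant scalar sequence has its eventual value as its unique cluster point.
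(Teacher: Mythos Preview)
Your argument is correct and is essentially identical to the paper's own proof: both observe that $f_k(\xi)$ is a cluster point of the scalar sequence $\bigl(\xi(\sum_{m=1}^{l}x^k_m)\bigr)_l$, compute this sequence via biorthogonality, and note it is eventually constant with the claimed value. The only differences are cosmetic---you spell out the general cluster-point principle and the bijectivity of the relabeling a bit more explicitly than the paper does.
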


\begin{proof}
    Note that $f_k(\widetilde{u}_i)$ is a cluster point of the sequence $\left( \widetilde{u}_i \left( \sum_{l=1}^m x^k_l \right) \right)_{m=1}^\infty$. It thus follows from biorthogonality that $f_k(\widetilde{u}_i)$ is a cluster point of a sequence of zeroes, and thus $f_k(\widetilde{u}_i) = 0$. Further, $f_k(\widetilde{x}^i_j)$ is a cluster point of the sequence $\left( \widetilde{x}^i_j \left( \sum_{l=1}^m x^k_l \right) \right)_{m=1}^\infty$, and again by biorthogonality, $\widetilde{x}^i_j \left( \sum_{l=1}^m x^k_l \right) = 1$ if $i=k$ and $m \geq l$, and $\widetilde{x}^i_j \left( \sum_{l=1}^m x^k_l \right) = 0$ otherwise. Hence, if $i \neq k$, $f_k(\widetilde{x}^i_j) = 0$ as it is a cluster point of a sequence of zeroes, and if $i = k$, $f_k(\widetilde{x}^i_j) = 1$ as is is a cluster point of a sequence which eventually equals one.
\end{proof}

\begin{lemma} \label{Lemma:Dosazeni}
    Let $\alpha < \omega_1$, $A \subseteq \mathbb{N}$ infinite with infinite complement, $z + a f_k$ a vector of type $t(i,k)$ compatible with $A$. Let $b \in \mathbb{F}$ and $y^* \in b \widetilde{z} + \operatorname{span} \{\widetilde{x}^t_s: \; s \in \mathbb{N}, \: t \in A \cup \{k\}\}$. Then
    \begin{enumerate}[(a)]
        \item $y^* \in \left( Q_b(\alpha,A,z+af_k) \right)^{(\alpha+1)}$;
        \item If moreover $y^* \in \operatorname{Ker}(z + a f_k)$, then $y^* \in \left( Q_b(\alpha,A,z+af_k) \right)^{(\alpha)}$.
    \end{enumerate}
\end{lemma}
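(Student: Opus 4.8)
The plan is to prove (a) and (b) together by transfinite induction on $\alpha$, writing the recursive step of Construction \ref{Construction} through (\ref{*}) so that successor and limit ordinals are handled uniformly.

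The technical heart is a \emph{localization observation}: for any $c\in\mathbb{F}$ and any $v\in Q_c(\alpha_n,A_n,x^k_n+a_nf_{p(n)})$ — so $v=c\,\widetilde{x}^k_n+\sum_{t\in A_n\cup\{p(n)\},\,s}d_{t,s}\widetilde{x}^t_s$ — one has $g(v)=0$ for every $g\in\Omega(\alpha,A,z+af_k)\setminus\{z+af_k\}$, $(z+af_k)(v)=ac$, the coefficient of $\widetilde{z}$ in $v$ equals $0$, and $\operatorname{supp}v\subseteq\{\widetilde{x}^t_s:t\in A\cup\{k\}\}$. This is verified by bookkeeping: by induction on Construction \ref{Construction} the only element of $\Omega(\alpha,A,z+af_k)$ that need not be of $x$-type is $z+af_k$ itself (every recursive seed is of the form $x^k_m+a_mf_{p(m)}$), and by Lemma \ref{Lemma:Typy} an element $g$ of a branch $\Omega(\alpha_m,A_m,\cdot)$ other than its own seed is $x^l_j+d'f_{l'}$ with $l\in A_m\cup\{p(m)\}$ and $l'\in A_m$; pairing it against $v$ and using Lemma \ref{Lemma:RozsBiort}, biorthogonality of $\mathcal{W}$, the pairwise disjointness of $A_0,A_1,\dots$ and $k\notin A$ makes $g(v)$ vanish when $m\neq n$, while for $m=n$ one is inside $K(\alpha_n,A_n,\cdot)$; similarly $(z+af_k)(v)=v(z)+af_k(v)=0+ac$, and $g(\widetilde{z})=0$ for all $g\in\Omega(\alpha,A,z+af_k)\setminus\{z+af_k\}$ (which is the point where the form of $z$, an $x^i_j$ or a $u_i$, would otherwise matter). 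In particular $Q_0(\alpha_n,A_n,\cdot)\subseteq Q_0(\alpha,A,z+af_k)$, hence $(Q_0(\alpha_n,A_n,\cdot))^{(\beta)}\subseteq(Q_0(\alpha,A,z+af_k))^{(\beta)}$ for all $\beta$. As a warm-up I would also record, by an easy induction on $\alpha$, that $Q_b(\alpha,A,z+af_k)\neq\emptyset$ for every $b$ (for $\alpha=0$ take $b\widetilde{z}-\tfrac{b}{a}\widetilde{x}^k_1$, using $a\neq0$; inductively take $v\in Q_{-b/a}(\alpha_1,A_1,x^k_1+a_1f_{p(1)})$ and apply the localization observation to $b\widetilde{z}+v$).

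For the inductive step of (b), set $V=\operatorname{span}\{\widetilde{x}^t_s:t\in A\cup\{k\}\}$ and $W_A=\operatorname{span}\{\widetilde{x}^t_s:t\in A\}$. First I would show $W_A\subseteq(Q_0(\alpha,A,z+af_k))^{(\alpha)}$: for $t\in A_n$ with $n\geq1$ the vector $\widetilde{x}^t_s$ lies in $\operatorname{Ker}(x^k_n+a_nf_{p(n)})\cap\operatorname{span}\{\widetilde{x}^{t'}_{s'}:t'\in A_n\cup\{p(n)\}\}$, so the inductive hypothesis (b) at level $\alpha_n$ gives $\widetilde{x}^t_s\in(Q_0(\alpha_n,A_n,\cdot))^{(\alpha_n)}\subseteq(Q_0(\alpha,A,\cdot))^{(\alpha)}$ (as $\alpha_n\leq\alpha$); for $t=p(n)\in A_0$ the same applies to $\widetilde{x}^{p(n)}_s-\widetilde{x}^{p(n)}_{s'}$, and then letting $s'\to\infty$ — the $\widetilde{x}^{p(n)}_{s'}$ are uniformly bounded by Lemma \ref{Lemma:BiorthogonalSystem}(i) and weak$^*$-null — gives $\widetilde{x}^{p(n)}_s\in(Q_0(\alpha,A,\cdot))^{(\alpha_n+1)}\subseteq(Q_0(\alpha,A,\cdot))^{(\alpha)}$, using $\alpha_n+1\leq\alpha$ by the choice of the $\alpha_n$. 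Now let $y^*\in\operatorname{Ker}(z+af_k)\cap(b\widetilde{z}+V)$ be finitely supported and write $y^*=b\widetilde{z}+\sum_s c_{k,s}\widetilde{x}^k_s+y^*_A$ with $y^*_A\in W_A$; the kernel condition reads $\sum_s c_{k,s}=-b/a$. With $M=\{n:c_{k,n}\neq0\}$ finite, pick $v_n\in Q_{c_{k,n}}(\alpha_n,A_n,x^k_n+a_nf_{p(n)})$ for $n\in M$ and set $e'=b\widetilde{z}+\sum_{n\in M}v_n$. The localization observation shows $e'\in Q_b(\alpha,A,z+af_k)$ — the crucial computation being $(z+af_k)(e')=b+a\sum_{n\in M}c_{k,n}=0$ — and also $c_{k,n}\widetilde{x}^k_n-v_n\in\operatorname{span}\{\widetilde{x}^t_s:t\in A_n\cup\{p(n)\}\}\subseteq W_A$, so $y^*-e'\in W_A\subseteq(Q_0(\alpha,A,\cdot))^{(\alpha)}$. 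Since $Q_b(\alpha,A,\cdot)=e'+Q_0(\alpha,A,\cdot)$ is affine and $(Q_0(\alpha,A,\cdot))^{(\alpha)}$ is a linear subspace (translation by a fixed vector commutes with passing to weak$^*$ derived sets), one gets $(Q_b(\alpha,A,\cdot))^{(\alpha)}=e'+(Q_0(\alpha,A,\cdot))^{(\alpha)}$, whence $y^*=e'+(y^*-e')\in(Q_b(\alpha,A,z+af_k))^{(\alpha)}$. This proves (b); and (a) follows at once, since for $y^*\in b\widetilde{z}+V$ finitely supported the vectors $y^*-\tfrac{(z+af_k)(y^*)}{a}\widetilde{x}^k_m$, $m$ large, lie in $\operatorname{Ker}(z+af_k)\cap(b\widetilde{z}+V)$, hence in $(Q_b(\alpha,A,\cdot))^{(\alpha)}$ by (b), are uniformly bounded, and converge weak$^*$ to $y^*$, so $y^*\in(Q_b(\alpha,A,z+af_k))^{(\alpha+1)}$.

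The step I expect to be most delicate is the localization observation — checking, uniformly across the successor/limit dichotomy, that a vector manufactured inside a single branch $\Omega(\alpha_n,A_n,\cdot)$ is annihilated by every element of every other branch and by $z+af_k$ up to exactly the scalar $ac$; this rests entirely on Lemma \ref{Lemma:Typy}, Lemma \ref{Lemma:RozsBiort}, biorthogonality and the pairwise disjointness of $A_0,A_1,\dots$ together with $k\notin A$. Once it is in place the remainder is routine manipulation of the affine structure of the sets $Q_b$ and the fact that $(Q_0)^{(\alpha)}$ is a subspace; the ordinal inequalities $\alpha_n\leq\alpha$ and $\alpha_n+1\leq\alpha$, as well as the degenerate case $a=0$ (where $Q_b$ is empty unless $b=0$), are read off directly from Construction \ref{Construction}.
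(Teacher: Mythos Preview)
Your proof is correct and rests on the same engine as the paper's: the transfinite induction on $\alpha$, the reduction of (a) to (b) via $y^*_m = y^* - \tfrac{1}{a}(z+af_k)(y^*)\,\widetilde{x}^k_m$, and above all the localization observation (which in the paper is precisely inclusion (\ref{Eq2})). The organization of the inductive step for (b), however, differs. The paper decomposes $y^*$ branch by branch as $b\widetilde{z} + \sum_{j=1}^m (c_j\widetilde{x}^k_j + v_j^*)$ with $v_j^* \in \operatorname{span}\{\widetilde{x}^t_s : t \in A_j \cup \{p(j)\}\}$, applies the inductive hypothesis \emph{(a)} at level $\alpha_j$ to each combined piece $c_j\widetilde{x}^k_j + v_j^*$ to land it in $(Q_{c_j}(\alpha_j,\cdot))^{(\alpha_j+1)} \subseteq (Q_{c_j}(\alpha_j,\cdot))^{(\alpha)}$, and then uses the localization inclusion $b\widetilde{z} + \sum_j Q_{c_j}(\alpha_j, \cdot) \subseteq Q_b(\alpha, \cdot)$ together with the fact that derived sets commute with finite sums. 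You instead first establish $W_A \subseteq (Q_0(\alpha,A,\cdot))^{(\alpha)}$ using the inductive hypothesis \emph{(b)} at levels $\alpha_n$ (with a separate weak$^*$-limit step $s' \to \infty$ for the $A_0$-coordinates), then pick a single anchor $e' \in Q_b$ and finish via the affine identity $(Q_b)^{(\alpha)} = e' + (Q_0)^{(\alpha)}$ and $y^* - e' \in W_A$. The paper's version is marginally more economical---it needs no separate nonemptiness warm-up and no extra limit for the $A_0$ indices---while your version isolates the subspace $(Q_0)^{(\alpha)}$ cleanly. Both routes invoke the ordinal inequality $\alpha_n + 1 \leq \alpha$ at exactly the same point, and both tacitly use $a\neq 0$ (which the paper also does without comment).
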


\begin{proof}
    First we will show that \textit{(b)} implies \textit{(a)}. Indeed, if we set for $n \in \mathbb{N}$
    \begin{align*}
        y_n^* = y^* - \frac{1}{a} (z+af_k)(y^*) \widetilde{x}^k_n,
    \end{align*}
    then for each $n \in \mathbb{N}$ clearly $y_n^* \in b \widetilde{z} + \operatorname{span} \{\widetilde{x}^t_s: \; s \in \mathbb{N}, \: t \in A \cup \{k\}\}$. Also, by Lemma \ref{Lemma:RozsBiort}, $(z+af_k)(\widetilde{x}^k_n) = a$, and thus $y_n^* \in \operatorname{Ker}(z + af_k)$. It thus follows from \textit{(b)} that $y_n^* \in \left( Q_b(\alpha,A,z+af_k) \right)^{(\alpha)}$. Further, the sequence $(y_n^*)_{n=1}^\infty$ weak$^*$ converges to $y^*$ as the sequence $(\widetilde{x}^k_n)_{n=1}^\infty$ is weak$^*$ null. Indeed, $(\widetilde{x}^k_n)_{n=1}^\infty$ is bounded and pointwise null (that is converging to zero in the topology generated by $\mathcal{W}$), and hence it is also weak$^*$ null as these topologies coincide on bounded sets. Thus, $y^* \in \left( Q_b(\alpha,A,z+af_k) \right)^{(\alpha+1)}$ and \textit{(a)} is true.
    
    We will prove \textit{(b}) by induction over $\alpha$. The case $\alpha = 0$ is clear as $K(0,A,z+af_k) = \operatorname{Ker}(z+af_k)$, see Construction \ref{Construction}. Let us suppose that both \textit{(a)} and \textit{(b)} hold for all $\beta < \alpha$ and take $y^* \in \operatorname{Ker}(z + a f_k)$ as in the statement of the lemma, that is
    \begin{align*}
        y^* = b \widetilde{z} + \sum_{j=1}^m \left( c_j \widetilde{x}^k_j + v_j^* \right),
    \end{align*}
    where $c_j \in \mathbb{F}$ and $v_j^* \in \operatorname{span}\{\widetilde{x}^t_s: \; s \in \mathbb{N}, \: s \in A_j \cup \{p(j)\}\}$ (recall that $A_j$ and $p(j)$ are defined in Construction \ref{Construction}). It follows from Lemma \ref{Lemma:RozsBiort} and the fact that $y^* \in \operatorname{Ker}(z + a f_k)$ that $\sum_{j=1}^m c_j = -\frac{b}{a}$. Indeed, $y^*(z) = b$ and $af_k(y^*) = a \sum_{j=1}^m c_j$. Recall that by Construction \ref{Construction} and (\ref{*})
    \begin{align*}
        K(\alpha,A,z+ a f_k) = \operatorname{Ker}(z + af_k) \cap \bigcap_{n=1}^\infty K(\alpha_n, A_n, x^k_n +a_n f_{p(n)}).
    \end{align*}
    It follows from \textit{(a)} of the induction hypothesis for $\alpha_j < \alpha$, $j=1,\dots,m$, that
    \begin{align*}
        c_j \widetilde{x}^k_j + v_j^* &\in \left( Q_{c_j}(\alpha_j, A_j, x^k_j +a_j f_{p(j)}) \right)^{(\alpha_j + 1)} \\
        &\subseteq \left( Q_{c_j}(\alpha_j, A_j, x^k_j +a_j f_{p(j)} \right)^{(\alpha)}.
    \end{align*}
    Let us now show that
    \begin{align} \label{Eq2}
        b \widetilde{z} + \sum_{j=1}^m Q_{c_j}(\alpha_j, A_j, x^k_j +a_j f_{p(j)}) \subseteq Q_b (\alpha,A,z+ a f_k).
    \end{align}
    Indeed, let us fix an element of the set on the left-hand side of (\ref{Eq2}),
    \begin{align*}
        w^* = b \widetilde{z} + \sum_{j=1}^m w_j^*,
    \end{align*}
    where $w_j^* \in Q_{c_j}(\alpha_j,A_j,x^k_j+a_j f_{p(j)})$. For later convenience set $w_j^* = 0$ for $j > m$. As $A_0 = \{p(j)\}_{j=1}^\infty$, and thus
    \begin{align*}
        A \cup \{k\} = \{k\} \cup \bigcup_{j=1}^\infty (A_j \cup \{p(j)\}),
    \end{align*}
    we get that $w^* \in b \widetilde{z} + \operatorname{span} \{\widetilde{x}^t_s: \; s \in \mathbb{N}, \: t \in A \cup \{k\}\}$. What is left is to show is that $w^* \in K(\alpha,A,z + af_k) = \left(\Omega(\alpha,A,z + af_k)\right)_{\perp}$. Take any $g \in \Omega(\alpha,A,z + af_k)$. Then by Construction \ref{Construction} either $g = z + af_k$ or $g \in \Omega(\alpha_j,A_j,x^k_j + a_j f_{p(j)})$ for some $j \in \mathbb{N}$. We shall first deal with the second case. By Lemma \ref{Lemma:Typy}, either $g$ is of type $t(l_1,l_2)$, where $l_1 \in A_j \cup \{p(j)\}$ and $l_2 \in A_j$, or $g = x^k_j + a_j f_{p(j)}$. In both cases, as the sets $A_l \cup \{p(l)\}$, $l \in \mathbb{N}$, are disjoint, we get, using Lemma \ref{Lemma:RozsBiort}, that $g(w_l^*) = 0$ for $l \neq j$. Hence, as $w_j^* \in K(\alpha_j,A_j,x^k_j + a_j f_{p(j)})$,
    \begin{align*}
        g(w^*) = g(w_j^*) = 0.
    \end{align*}
    Now we deal with the case $g = z + af_k$. We have that
    \begin{align*}
        g(w^*) = (z + af_k)(w^*) = b + \sum_{j=1}^m ac_j = 0
    \end{align*}
    as $w_j^* \in Q_{c_j}(\alpha_j,A_j,x^k_j+a_j f_{p(j)})$ and $\sum_{j=1}^m c_j = - \frac{b}{a}$. We have shown that for any $g \in \Omega(\alpha,A,z + af_k)$, $g(w^*) = 0$, and thus $w^* \in K(\alpha,A,z + af_k)$. Hence, (\ref{Eq2}) is proved.
    
    Finally,
    \begin{align*}
        y^* &= b \widetilde{z} + \sum_{j=1}^m \left( c_j \widetilde{x}^k_j + v_j^* \right) \in b \widetilde{z} + \sum_{j=1}^m \left( Q_{c_j}(\alpha_j, A_j, x^k_j +a_j f_{p(j)}) \right)^{(\alpha)} \\
        &\subseteq \left( b \widetilde{z} + \sum_{j=1}^m Q_{c_j}(\alpha_j, A_j, x^k_j +a_j f_{p(j)}) \right)^{(\alpha)} \subseteq \left( Q_b(\alpha,A,z + af_k) \right)^{(\alpha)}
    \end{align*}
    and \textit{(b)} is proved.
\end{proof}

\begin{corollary} \label{Corollary:Dosazeni}
    Let $\alpha < \omega_1$, $A \subseteq \mathbb{N}$ be infinite with infinite complement and $z + a f_k$ be a vector of type $t(i,k)$ compatible with $A$. Then $W^* = \left( K(\alpha,A,z + a f_k) \right)^{(\alpha+1)}$. Moreover, any finitely supported $y^* \in \operatorname{Ker}(z + a f_k)$ is an element of $\left( K(\alpha,A,z + a f_k) \right)^{(\alpha)}$.
\end{corollary}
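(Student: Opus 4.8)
The plan is to reduce both claims to Lemma \ref{Lemma:Dosazeni} once we have located a large enough subspace of $K(\alpha,A,z+af_k)$ itself. Identify an element of $W^*$ with its coordinates with respect to the biorthogonal functionals $\widetilde x^t_s$ ($t,s\in\mathbb N$) and $\widetilde u_r$ ($r\in\mathbb N$). Let $S$ be the linear span of all $\widetilde x^t_s$ with $t\notin A\cup\{k\}$ together with all $\widetilde u_r$, with the single functional $\widetilde z$ deleted from this list. I claim $S\subseteq K(\alpha,A,z+af_k)$. First note that, by an immediate induction from Construction \ref{Construction} (all seeds fed into the recursion are of the form $x^\bullet_\bullet+(\text{positive})\,f_\bullet$), every element of $\Omega(\alpha,A,z+af_k)$ other than the root $z+af_k$ is of the form $x^l_j+c\,f_m$; moreover $l\in A\cup\{k\}$ and $m\in A$ by Lemma \ref{Lemma:Typy}, while for the root one has $i,k\notin A$ or $k\notin A$ according to the type of $z$. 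Since $f_m(\widetilde u_r)=0$ for all $r$ and $f_m(\widetilde x^t_s)=\delta_{m,t}$ by Lemma \ref{Lemma:RozsBiort}, a short case distinction (root versus non-root, and $z=x^i_j$ versus $z=u_i$) shows that every $g\in\Omega(\alpha,A,z+af_k)$ kills each generator of $S$, hence $S\subseteq(\Omega(\alpha,A,z+af_k))_\perp=K(\alpha,A,z+af_k)$.

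With $S$ in hand, the ``moreover'' assertion is quick. A finitely supported $y^*\in\operatorname{Ker}(z+af_k)$ decomposes along this splitting of the coordinate basis as $y^*=(b\widetilde z+y_1^*)+y_S^*$, where $b\in\mathbb F$, $y_1^*\in\operatorname{span}\{\widetilde x^t_s:\ s\in\mathbb N,\ t\in A\cup\{k\}\}$ and $y_S^*\in S$. Since $y_S^*\in S\subseteq K(\alpha,A,z+af_k)=(\Omega(\alpha,A,z+af_k))_\perp$ and $z+af_k\in\Omega(\alpha,A,z+af_k)$, we get $(z+af_k)(y_S^*)=0$, so $b\widetilde z+y_1^*=y^*-y_S^*\in\operatorname{Ker}(z+af_k)$. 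Now Lemma \ref{Lemma:Dosazeni}(b) gives $b\widetilde z+y_1^*\in(Q_b(\alpha,A,z+af_k))^{(\alpha)}\subseteq(K(\alpha,A,z+af_k))^{(\alpha)}$, and $y_S^*\in K(\alpha,A,z+af_k)\subseteq(K(\alpha,A,z+af_k))^{(\alpha)}$; since weak$^*$ derived sets of a subspace are again subspaces, $y^*\in(K(\alpha,A,z+af_k))^{(\alpha)}$.

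For the equality $W^*=(K(\alpha,A,z+af_k))^{(\alpha+1)}$, fix $y^*\in W^*$. The canonical projections of point 4 of Notation \ref{Notation} give finitely supported vectors $P_m^*y^*$ with $\sup_m\|P_m^*y^*\|<\infty$ and $P_m^*y^*\to y^*$ weak$^*$. These need not lie in $\operatorname{Ker}(z+af_k)$, so put $y_m^*=P_m^*y^*-\tfrac1a(z+af_k)(P_m^*y^*)\,\widetilde x^k_m$; by Lemma \ref{Lemma:RozsBiort}, $(z+af_k)(\widetilde x^k_m)=a$, so $y_m^*\in\operatorname{Ker}(z+af_k)$, it is still finitely supported, and hence $y_m^*\in(K(\alpha,A,z+af_k))^{(\alpha)}$ by the previous paragraph. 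The scalars $(z+af_k)(P_m^*y^*)$ stay bounded and $(\widetilde x^k_m)_m$ is uniformly bounded (Lemma \ref{Lemma:BiorthogonalSystem}(i)) and weak$^*$ null, so $\sup_m\|y_m^*\|<\infty$ and $y_m^*\to y^*$ weak$^*$. Thus $y^*\in\overline{(K(\alpha,A,z+af_k))^{(\alpha)}\cap NB_{W^*}}^{w^*}$ for $N$ large, i.e.\ $y^*\in(K(\alpha,A,z+af_k))^{(\alpha+1)}$; the opposite inclusion is obvious.

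I expect the first paragraph --- choosing $S$ correctly and verifying $S\subseteq K(\alpha,A,z+af_k)$ --- to be the crux, since it is precisely there that the combinatorial description of $\Omega(\alpha,A,z+af_k)$ must be matched against the biorthogonality relations for the $f_k$'s; everything afterwards is a coordinate decomposition plus a routine uniform-boundedness approximation.
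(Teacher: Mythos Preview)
Your proof is correct and follows essentially the same route as the paper. Your subspace $S$ is precisely the span of $\{\widetilde w:\ w\in\mathcal W\setminus(\{z\}\cup\{x^t_s:\ t\in A\cup\{k\}\})\}$, and your $y_S^*$ plays exactly the role of the paper's $z^*$; the paper simply cites Lemmata \ref{Lemma:Typy} and \ref{Lemma:RozsBiort} for $z^*\in K(\alpha,A,z+af_k)$, whereas you spell out the case distinction (and correctly observe the small extra point that all non-root elements of $\Omega$ are of the $x^l_j+cf_m$ form, which is needed to handle the $\widetilde u_r$ generators). Your argument for $W^*=(K(\alpha,A,z+af_k))^{(\alpha+1)}$ via $y_m^*=P_m^*y^*-\tfrac1a(z+af_k)(P_m^*y^*)\,\widetilde x^k_m$ is verbatim the paper's.
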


\begin{proof}
    Let us first prove the second statement. Take a finitely supported vector $y^* \in \operatorname{Ker}(z + a f_k)$. As $y^*$ is finitely supported, we have
    \begin{align*}
        y^* = \sum_{w \in \mathcal{W}} y^*(w) \widetilde{w},
    \end{align*}
    where only finitely many of the summands are nonzero. We can thus set
    \begin{align*}
        z^* = \sum_{w \in \mathcal{W} \setminus \left(\{z\} \cup \{x^t_s: \; s \in \mathbb{N}, \: t \in A \cup \{k\}\}\right)} y^*(w) \widetilde{w}.
    \end{align*}
    Then $y^* - z^* \in y^*(z) \widetilde{z} + \operatorname{span} \{\widetilde{x}^t_s: \; s \in \mathbb{N}, \: t \in A \cup \{k\}\}$ and $y^* - z^* \in \operatorname{Ker}(z + a f_k)$ by Lemma \ref{Lemma:RozsBiort}. It follows from Lemma \ref{Lemma:Dosazeni} that $y^* - z^* \in \left( K(\alpha,A,z + a f_k) \right)^{(\alpha)}$. Hence, also $y^* \in \left( K(\alpha,A,z + a f_k) \right)^{(\alpha)}$, as $z^* \in K(\alpha,A, z + a f_k)$ by Lemmata \ref{Lemma:Typy} and \ref{Lemma:RozsBiort}.
    
    Now let us prove the first statement. Take any $y^* \in W^*$ and define for $n \in \mathbb{N}$
    \begin{align*}
        y_n^* = P_n^*(y^*) - \frac{1}{a} (z + a f_k)(P_n^*(y^*)) \widetilde{x}^k_n.
    \end{align*}
    Then each $y_n^*$ is finitely supported and is also an element of $\operatorname{Ker}(z + a f_k)$ as by Lemma \ref{Lemma:RozsBiort} we have that $(z+af_k)(\widetilde{x}^k_n) = a$. Hence, $y_n^*$ is an element of $\left( K(\alpha,A,z + a f_k) \right)^{(\alpha)}$ by the already proved part of the corollary. It thus follows that $y^*$, which is the weak$^*$ limit of the sequence $(y_n^*)_{n=1}^\infty$ as the sequence $(\widetilde{x}^k_n)_{n=1}^\infty$ is weak$^*$ null, is an element of $\left( K(\alpha,A,z + a f_k) \right)^{(\alpha+1)}$.
\end{proof}

\begin{theorem} \label{Theorem:MainW}
    Let $0 < \alpha < \omega_1$ be a successor ordinal and $(a_n)_{n=1}^\infty$ be a summable sequence of positive numbers. Let $(A_n)_{n=0}^\infty$ be a partition of $\mathbb{N}$ into countably many infinite subsets and let $(q(n))_{n=1}^\infty$ be the increasing enumeration of $A_0$. Set
    \begin{align*}
        K = \bigcap_{n=1}^\infty K(\alpha - 1,A_n, u_n + a_n f_{q(n)}).
    \end{align*}
    Then $K^{(\alpha)} \subsetneq \overline{K^{(\alpha)}} = W^{*}$.
\end{theorem}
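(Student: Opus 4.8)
The statement asks for two things: that $K^{(\alpha)}$ is a \emph{proper} subspace of $W^*$, and that it is \emph{norm dense}. First, the construction is legitimate: $u_n+a_nf_{q(n)}$ is a vector of type $t(n,q(n))$ and is compatible with $A_n$, since $q(n)\in A_0$ forces $q(n)\notin A_n$, and $A_n$ is infinite with infinite complement (the complement contains $A_0$); so each $K(\alpha-1,A_n,u_n+a_nf_{q(n)})$ is defined. Set $M=\bigcap_{n=1}^\infty\operatorname{Ker}(u_n+a_nf_{q(n)})$. The plan is to get properness from Proposition \ref{Proposition:Rovnost} and norm density from an intersection-version of Lemmata \ref{Lemma:Dosazeni}--\ref{Corollary:Dosazeni}.

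\emph{Properness.} Since $\alpha$ is a successor, $K^{(\alpha)}=\big(K^{(\alpha-1)}\big)^{(1)}$. From $K\subseteq K(\alpha-1,A_n,u_n+a_nf_{q(n)})$, monotonicity of $(\cdot)^{(\alpha-1)}$, and Proposition \ref{Proposition:Rovnost} applied with $\alpha-1$ in place of $\alpha$, one gets $K^{(\alpha-1)}\subseteq\bigcap_n\big(K(\alpha-1,A_n,u_n+a_nf_{q(n)})\big)^{(\alpha-1)}\subseteq\bigcap_n\operatorname{Ker}(u_n+a_nf_{q(n)})=M$, hence $K^{(\alpha)}\subseteq M^{(1)}$. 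Now $M$ is not norming: for $f\in M\cap B_{W^*}$ one has $f(u_n)=-a_nf_{q(n)}(f)$, so $|f(u_n)|\le a_n\norm{f_{q(n)}}$; since the $f_k$ are uniformly bounded (Notation \ref{Notation}) and $\norm{u_n}\ge\norm{\widetilde{u}_n}^{-1}$ is bounded below (Lemma \ref{Lemma:BiorthogonalSystem}(i)), the ratio $q_M(u_n)/\norm{u_n}$ is bounded by a constant times $a_n$, which tends to $0$ because $(a_n)$ is summable. So $q_M$ is not equivalent to the norm on $W$, i.e. $M^{(1)}\ne W^*$, and therefore $K^{(\alpha)}\subsetneq W^*$.

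\emph{Density.} The core is the following \textbf{Claim}: for every $n\ge1$, every $b\in\mathbb F$ and every $y^*\in b\widetilde{u}_n+\operatorname{span}\{\widetilde{x}^t_s:t\in A_n\cup\{q(n)\}\}$ with $y^*\in\operatorname{Ker}(u_n+a_nf_{q(n)})$, one has $y^*\in K^{(\alpha-1)}$. To prove it, decompose $y^*=b\widetilde{u}_n+\sum_{j=1}^m\big(c_j\widetilde{x}^{q(n)}_j+v_j^*\big)$ along the splitting of $A_n$ prescribed by Construction \ref{Construction} inside $K(\alpha-1,A_n,u_n+a_nf_{q(n)})$, exactly as in the proof of Lemma \ref{Lemma:Dosazeni}; the kernel condition and Lemma \ref{Lemma:RozsBiort} force $\sum_j c_j=-b/a_n$. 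Applying Lemma \ref{Lemma:Dosazeni}(a) to the single sets $K(\gamma_j,B_j,x^{q(n)}_j+d_jf_{p(j)})$ of that splitting (here $\gamma_j+1\le\alpha-1$, whether $\alpha-1$ is a successor or a limit) places each $c_j\widetilde{x}^{q(n)}_j+v_j^*$ in $\big(Q_{c_j}(\gamma_j,B_j,x^{q(n)}_j+d_jf_{p(j)})\big)^{(\alpha-1)}$, so it suffices to check that $b\,\widetilde{u}_n+\sum_{j=1}^m Q_{c_j}(\gamma_j,B_j,x^{q(n)}_j+d_jf_{p(j)})\subseteq K$, which then gives $y^*\in\big(b\widetilde{u}_n+\sum_j Q_{c_j}(\gamma_j,B_j,x^{q(n)}_j+d_jf_{p(j)})\big)^{(\alpha-1)}\subseteq K^{(\alpha-1)}$. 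The inclusion into $K(\alpha-1,A_n,u_n+a_nf_{q(n)})$ is checked as for (\ref{Eq2}) (the identity $\sum_j c_j=-b/a_n$ making such a $w^*$ annihilate $u_n+a_nf_{q(n)}$); the inclusion into $K(\alpha-1,A_m,u_m+a_mf_{q(m)})$ for $m\ne n$ is automatic, because $w^*$ is supported on $\{\widetilde{u}_n\}\cup\{\widetilde{x}^t_s:t\in A_n\cup\{q(n)\}\}$ while, by Lemma \ref{Lemma:Typy} and the fact that $u_m+a_mf_{q(m)}$ is the only $u$-vector in $\Omega(\alpha-1,A_m,u_m+a_mf_{q(m)})$, every element of that set is either $u_m+a_mf_{q(m)}$ or an $x$-vector $x^l_p+cf_{l'}$ with $l\in A_m\cup\{q(m)\}$ and $l'\in A_m$; since the blocks $A_m\cup\{q(m)\}$ ($m\ge1$) are pairwise disjoint, Lemma \ref{Lemma:RozsBiort} gives $w^*(g)=0$ for all such $g$. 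Granting the Claim, take $b=1$ and $y^*=\widetilde{u}_n-a_n^{-1}\widetilde{x}^{q(n)}_j$ (in $\operatorname{Ker}(u_n+a_nf_{q(n)})$ by Lemma \ref{Lemma:RozsBiort}): then $\widetilde{u}_n-a_n^{-1}\widetilde{x}^{q(n)}_j\in K^{(\alpha-1)}$, and since $\widetilde{x}^{q(n)}_j\overset{w^*}{\longrightarrow}0$ as $j\to\infty$, also $\widetilde{u}_n\in K^{(\alpha)}$, whence $\widetilde{x}^{q(n)}_j=a_n\widetilde{u}_n-a_n(\widetilde{u}_n-a_n^{-1}\widetilde{x}^{q(n)}_j)\in K^{(\alpha)}$; taking $b=0$ and summing over the finitely many blocks meeting a given finite support gives $\operatorname{span}\{\widetilde{x}^t_s:t\notin A_0\}\subseteq K^{(\alpha-1)}$. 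As $\{q(n):n\ge1\}=A_0$ and $K^{(\alpha)}$ is a subspace containing $K^{(\alpha-1)}$, we obtain $\operatorname{span}\{\widetilde{w}:w\in\mathcal W\}\subseteq K^{(\alpha)}$; since this span is norm dense in $W^*$ (the finite-dimensional decomposition of Lemma \ref{Lemma:BiorthogonalSystem}(iv) is shrinking, $(u_n)$ being shrinking and the $x$-blocks having uniformly bounded partial sums), $\overline{K^{(\alpha)}}=W^*$, and with properness $K^{(\alpha)}\subsetneq\overline{K^{(\alpha)}}=W^*$.

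The main obstacle is the Claim. Proposition \ref{Proposition:Rovnost} controls one $K(\gamma,A,z+af_k)$ at a time, whereas $K$ is an infinite intersection and in general $\big(\bigcap_m K_m\big)^{(\gamma)}$ is far smaller than $\bigcap_m K_m^{(\gamma)}$; what makes the assembly survive the intersection is precisely the essential disjointness of the blocks $\{\widetilde{u}_n\}\cup\{\widetilde{x}^t_s:t\in A_n\cup\{q(n)\}\}$ together with the fact that each $\Omega(\alpha-1,A_m,u_m+a_mf_{q(m)})$ contains only one $u$-vector, so that a functional supported in the $n$-th block and annihilating $\Omega(\alpha-1,A_n,\,\cdot\,)$ automatically annihilates every $\Omega(\alpha-1,A_m,\,\cdot\,)$, hence lies in $K$ rather than merely in $K_n$. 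Getting this bookkeeping exactly right, and matching the decomposition of $y^*$ to the recursion of Construction \ref{Construction} level by level, is the delicate part; a secondary item is confirming the norm density of $\operatorname{span}\{\widetilde{w}:w\in\mathcal W\}$ in $W^*$, i.e. the shrinkingness of the decomposition in Lemma \ref{Lemma:BiorthogonalSystem}(iv).
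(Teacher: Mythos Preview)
Your properness argument is correct and matches the paper's Claim~1 exactly. Your Claim is also correct: it is Lemma~\ref{Lemma:Dosazeni}(b) for the top level combined with the observation (used in the paper's Claims~2 and~3) that $Q_c(\alpha-1,A_n,u_n+a_nf_{q(n)})\subseteq K$, and from it you legitimately deduce $\operatorname{span}\{\widetilde{w}:w\in\mathcal W\}\subseteq K^{(\alpha)}$.

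The gap is the last step of the density argument. You conclude $\overline{K^{(\alpha)}}=W^*$ from $\operatorname{span}\{\widetilde{w}:w\in\mathcal W\}\subseteq K^{(\alpha)}$ by asserting that this span is norm dense in $W^*$, i.e.\ that the finite-dimensional decomposition of Lemma~\ref{Lemma:BiorthogonalSystem}(iv) is \emph{shrinking}. This is not part of Lemma~\ref{Lemma:BiorthogonalSystem}: only $(u_n)$ is asserted to be shrinking, and Notation~\ref{Notation}(4) records merely the weak$^*$ convergence $P_n^*y^*\overset{w^*}{\to}y^*$, not norm convergence. Your justification (``$(u_n)$ shrinking and the $x$-blocks having uniformly bounded partial sums'') is not a valid implication; uniformly bounded partial sums is unrelated to shrinkingness, and nothing in Ostrovskii's construction guarantees that the $x$-part is shrinking. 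Without this, $\operatorname{span}\{\widetilde{w}\}$ is only weak$^*$ dense in $W^*$, which gives no information about $\overline{K^{(\alpha)}}$.

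The paper handles density differently and avoids this assumption. It first proves $U^\perp\subseteq\overline{K^{(\alpha)}}$ (Claim~3): for $y^*\in U^\perp$ one sets $y_n^*=P_n^*y^*$ (only weak$^*$ convergent to $y^*$) and corrects by a \emph{summable} series $\sum_m a_mf_{q(m)}(y_n^*)\widetilde u_m$ to obtain $z_n^*\in K^{(\alpha-1)}$; a subsequential weak$^*$ limit $z^*\in K^{(\alpha)}$ then differs from $y^*$ by an absolutely convergent $\widetilde u$-series, so the norm approximation to $y^*$ comes from the summability of $(a_n)$, not from any shrinkingness of the $x$-part. Only then (Claim~4) is the shrinkingness of $(u_n)$ used, to reduce an arbitrary $y^*$ to $U^\perp$ up to a finite $\widetilde u$-combination. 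Your argument recovers Claim~2 and more, but to finish you must replace the unjustified ``FDD is shrinking'' step by an argument of this kind.
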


\begin{proof}
    We will prove the following claims:
    \paragraph*{\textbf{Claim 1}} $K^{(\alpha)} \neq W^*$. Indeed, by Proposition \ref{Proposition:Rovnost}
    \begin{align*}
        K^{(\alpha - 1)} \subseteq \bigcap_{n=1}^\infty \operatorname{Ker}(u_n + a_n f_{q(n)}).
    \end{align*}
    Hence, there is $C>0$ such that any functional $y^* \in K^{(\alpha-1)}$ of norm at most one satisfies for each $n \in \mathbb{N}$
    \begin{align*}
        |y^*(u_n)| = a_n|f_{q(n)}(y^*)| \leq C a_n
    \end{align*}
    (recall that $(f_{q(n)})_{n=1}^\infty$ is bounded). It follows that $K^{(\alpha-1)}$ is not norming. Indeed, if $K^{(\alpha-1)}$ was norming, the sequence $(u_n)_{n=1}^\infty$ would be norm null. But this cannot happen as the biorthogonal functionals $(\widetilde{u}_n)_{n=1}^\infty$ are bounded. Hence, $K^{(\alpha)} = \left( K^{(\alpha-1)} \right)^{(1)} \neq W^*$.
    
    \paragraph*{\textbf{Claim 2}} $\widetilde{u}_n \in K^{(\alpha)}$ for each $n \in \mathbb{N}$. It follows from Lemma \ref{Lemma:Dosazeni} \textit{(a)} that $\widetilde{u}_n \in \left( Q_1(\alpha-1,A_n,u_n + a_nf_{q(n)}) \right)^{(\alpha)}$. Further, it follows from Lemma \ref{Lemma:Typy} and Lemma \ref{Lemma:RozsBiort} that for any $j \neq n$ we have that $Q_1(\alpha-1,A_n,u_n+a_if_{q(n)}) \subseteq K(\alpha-1,A_j,u_j+a_jf_{q(j)})$. Moreover, $Q_1(\alpha-1,A_n,u_n+a_if_{q(n)}) \subseteq K(\alpha-1,A_n,u_n+a_nf_{q(n)})$ by definition. Hence,
    \begin{align*}
        \widetilde{u}_n &\in \left( Q_1(\alpha-1,A_n,u_n + a_nf_{q(n)}) \right)^{(\alpha)} \\
        &\subseteq \left( \bigcap_{j=1}^\infty K(\alpha-1,A_j,u_j + a_jf_{q(j)}) \right)^{(\alpha)} = K^{(\alpha)}.
    \end{align*}
    
    \paragraph*{\textbf{Claim 3}} $U^\perp \subseteq \overline{K^{(\alpha)}}$. Recall that $U = [u_n]_{n=1}^\infty$. Let $y^* \in U^\perp$ and for $n \in \mathbb{N}$ set $y_n^* = P_n^*(y^*)$ and
    \begin{align*}
        z_n^* = y_n^* - \sum_{m=1}^\infty a_m f_{q(m)} (y_n^*) \widetilde{u}_m.
    \end{align*} 
    Then, for each $n \in \mathbb{N}$, the element $y_n^*$ is finitely supported. It follows from Lemma \ref{Lemma:RozsBiort} that for each $n \in \mathbb{N}$ the number $f_{q(m)}(y_n^*)$ is nonzero for only finitely many $m \in \mathbb{N}$. Hence, for all $n \in \mathbb{N}$, $z_n^*$ is also finitely supported. Further, as
    \begin{align*}
        \mathcal{W} = \bigcup_{i \in \mathbb{N}} \{u_i\} \cup \{x^t_s : \: s \in \mathbb{N}, \; t \in A_i \cup \{q(i)\}\},
    \end{align*}
    each $z_n^*$ can be decomposed as $z_n^* = \sum_{i=1}^m w_{n,i}^*$ for some $m \in \mathbb{N}$, where
    \begin{align*}
        w_{n,i}^* \in a_{n,i} \widetilde{u}_i + \operatorname{span} \{\widetilde{x}^t_s : \: s \in \mathbb{N}, \: t \in A_i \cup \{q(i)\}\}
    \end{align*}
    for $a_{n,i} = z_n^*(u_i) = y_n^*(u_i) - a_i f_{q(i)}(y_n^*) = - a_i f_{q(i)}(y_n^*)$. Indeed,
    \begin{align*}
        y_n^*(u_i) = (P_n^* \: y^*)(u_i) = y^* (P_n \: u_i) = 0
    \end{align*}
    as either $P_n u_i = u_i$ if $i \leq n$, or $P_n u_i = 0$ if $i > n$ -- in both cases $y^*(P_n u_i) = 0$ as $y^* \in U^\perp$. Moreover, for $i=1,\dots,m$, we have that $w_{n,i}^*$ is an element of $\operatorname{Ker}(u_i + a_i f_{q(i)})$ as
    \begin{align*}
        (u_i + a_i f_{q(i)}) (w_{n,i}^*) &= (u_i + a_i f_{q(i)}) (z_n^*) = (u_i + a_i f_{q(i)}) (y_n^*) - a_i f_{q(i)}(y_n^*) \\
        &= y_n^*(u_i) = 0,
    \end{align*}
    where the first equality follows from Lemma \ref{Lemma:RozsBiort}. Recall that for $i,j \in \mathbb{N}$ and $c \in \mathbb{F}$ it holds that $Q_c(\alpha-1,A_i,u_i+a_if_{q(i)}) \subseteq K(\alpha-1,A_j,u_j+a_jf_{q(j)})$ by Lemma \ref{Lemma:Typy} and Lemma \ref{Lemma:RozsBiort}. It then follows from Lemma \ref{Lemma:Dosazeni} \textit{(b)} that for $i=1,\dots,m$
    \begin{align*}
        w_{n,i}^* &\in \left(Q_{a_{n,i}}(\alpha-1,A_i,u_i + a_i f_{q(i)})\right)^{(\alpha-1)} \\
        &\subseteq \left( \bigcap_{j=1}^\infty K(\alpha-1,A_j,u_j + a_j f_{q(j)}) \right)^{(\alpha-1)} = K^{(\alpha-1)}.
    \end{align*}
    Hence, $z_n^* \in K^{(\alpha - 1)}$ for each $n \in \mathbb{N}$. It follows from boundedness and a diagonal argument that there is an increasing sequence $(n_k)_{k=1}^\infty$ of integers and a sequence of scalars $(c_m)_{m \in \mathbb{N}}$ such that $a_m f_{q(m)}(y_{n_k}^*) \rightarrow c_m$ for each $m \in \mathbb{N}$. Further, $(c_m)_{m=1}^\infty$ is absolutely summable as $|c_m| \leq \sup_j \|f_{q(j)}\| \sup_k \|y_{n_k}^*\| a_m \leq C a_m$ for some $C>0$ which does not depend on $m$. Then
    \begin{align*}
        z_{n_k}^* \overset{w^*}{\underset{k \rightarrow \infty}{\longrightarrow}} y^* - \sum_{m=1}^\infty c_m \widetilde{u}_m =: z^*.
    \end{align*}
    Hence, $z^* \in K^{(\alpha)}$ and since
    \begin{align*}
        y^* = \lim_{n \rightarrow \infty} \left( z^* + \sum_{m=1}^n c_m \widetilde{u}_m \right)
    \end{align*}
    and $z^* + \sum_{m=1}^n c_m \widetilde{u}_m \in K^{(\alpha)}$ by Claim 2, it follows that $y^* \in \overline{K^{(\alpha)}}$.
    
    \paragraph*{\textbf{Claim 4}} $\overline{K^{(\alpha)}} = W^*$. Take any $y^* \in W^*$ and $\epsilon > 0$. Since $(u_n)_{n=1}^\infty$ is shrinking (by Lemma \ref{Lemma:BiorthogonalSystem} \textit{(ii)}) there is a finite linear combination $u^* = \sum_{j=1}^m \lambda_j \widetilde{u}_j \restriction U \in U^*$, such that $\norm{y^* \restriction U - u^*} \leq \epsilon$. Let $w^*$ be a Hahn-Banach extension of $y^* \restriction U - u^*$ to $W$. That is $\norm{w^*} \leq \epsilon$ and $w^* \restriction U = y^* \restriction U - u^*$. Then
    \begin{align*}
        y^* - w^* - \sum_{j=1}^m \lambda_j \widetilde{u}_j \in U^\perp \subseteq \overline{K^{(\alpha)}}.
    \end{align*}
    by Claim 3. Claim 2 thus yields that $y^* - w^* \in \overline{K^{(\alpha)}}$. Moreover, as $\norm{w^*} \leq \epsilon$, we get $\operatorname{dist}(y^*, \overline{K^{(\alpha)}}) \leq \epsilon$. As $\epsilon$ was arbitrary, we get $y^* \in \overline{K^{(\alpha)}}$.
\end{proof}

Now we are all prepared to prove Theorem \ref{Theorem:Main}.

\begin{proof}[Proof of Theorem \ref{Theorem:Main}]
    The equivalence \textit{(1)} $\iff$ \textit{(2)} follows from \cite[Theorem 1]{Ostrovskii2011} and the implication $\textit{(3)} \implies \textit{(2)}$ is clear. To prove the implication $\textit{(1)} \implies \textit{(3)}$ we fix a successor ordinal $\alpha < \omega_1$ and use Lemma \ref{Lemma:BiorthogonalSystem} and Theorem \ref{Theorem:MainW} to find a subspace $W$ of $X$ and a subspace $K$ of $W^*$, such that $K^{(\alpha)} \subsetneq \overline{K^{(\alpha)}} = W^*$. Let $E: W \rightarrow X$ be the identity embedding. Then $E^*:X^* \rightarrow W^*$ is the restriction map. Set $A = (E^*)^{-1}(K)$. It follows from \cite[Lemma 1]{Ostrovskii1987} and the fact that $E^*$ is onto that $A^{(\alpha)} = (E^*)^{-1}(K^{(\alpha)}) \subsetneq X^*$. As $E^*$ is an open mapping, the preimage of a dense set is dense. Thus $\overline{A^{(\alpha)}} = X^*$.
\end{proof}

Note that in Theorem \ref{Theorem:Main} \textit{(3)} we restrict ourselves only to successor ordinals. The reason lies in the proof of Claim 3 of Theorem \ref{Theorem:MainW}. More specifically, we needed to pass from a general $y^* \in U^\perp$ to elements $y_n^* = P_n^*(y^*)$ with finite support. Following the proof of Claim 3 for limit ordinal $\alpha$, with considering $(\alpha_n)_{n=1}^\infty$ instead of $\alpha-1$ (see Construction \ref{Construction}), we would end with $z_{n_k}^* \in K^{(\alpha_{n_k})}$, and thus $z^* \in \left(\bigcup_{k=1}^\infty K^{(\alpha_{n_k})} \right)^{(1)} \subseteq K^{(\alpha + 1)}$. We would, however, need $z^*$ to be in $K^{(\alpha)}$. The problem for limit ordinals thus remains open:

\begin{question}
    Let $\alpha$ be a limit ordinal and $X$ be a non-quasi-reflexive Banach space containing an infinite-dimensional subspace with separable dual. Is there a subspace $A$ of $X^*$ such that $A^{(\alpha)} \subsetneq \overline{A^{(\alpha)}} = X^*$?
\end{question}

\bibliographystyle{acm}
\bibliography{bibliography}

\end{document}